\newtheorem{thm}{Theorem}[section]
\newtheorem{prop}[thm]{Proposition}
\newtheorem{cor}[thm]{Corollary}
\newtheorem{conj}[thm]{Conjecture}
\newtheorem{lem}[thm]{Lemma}
\theoremstyle{definition}
\newtheorem{defi}[thm]{Definition}
\newtheorem{ex}[thm]{Example}
\def\blfootnote{\xdef\@thefnmark{}\@footnotetext} 
\date{}
\author{\normalsize{A. GRISHKOV, M. RASSKAZOVA, AND G. SOUZA DOS ANJOS\footnote{This study was financed in part by the Coordena\c c\~ao de Aperfei\c coamento de Pessoal de N\'ivel Superior - Brasil (CAPES) - Finance Code 001}}}
\title{\textbf{\Large{FREE BOL LOOPS OF EXPONENT TWO}}}
\begin{document}

\maketitle

\begin{abstract} 
\noindent{}A Bol  loop  is a loop that satisfies the identity $x((yz)y)=((xy)z)y$. In this paper, we give a construction of the free Bol loops of exponent two. We define a canonical form of all their elements and describe their multiplication law based on this form.

\end{abstract}

{\it Keywords}: Bol loop, free loop.

\section{Introduction}

A \emph{loop} consists of a nonempty set $L$ with a binary operation $*$ such that, for each $ a, b \in L$, the equations $ a * x  =  b $ and $ y * a =  b$ have unique solutions for $x,y \in L$, and there exists an \emph{identity element} $1\in L$ satisfying $ 1 * x = x = x *  1$, for any $ x \in L $. 
A \emph{(right) Bol  loop}  is a loop that satisfies the (right) Bol identity 

\begin{equation}
\label{bol}
x((yz)y)=((xy)z)y.
\end{equation}

One of the most interesting subvarieties of Bol loops is the variety ${\bf B}_2$ of Bol loops of exponent two. Every loop in ${\bf B}_2$ is a Bruck loop, i.e., a Bol loop with the automorphic inverse property ($(xy)^{-1} = x^{-1}y^{-1}$, for every $x,y$ in the loop). Many constructions of non-associative loops of ${\bf B}_2$ can be found in the literature (see \cite{Ki,KN} for example), the minimal such loop has order $8$. Some of the most important problems involving loops of ${\bf B}_2$ are those related to solvability and existence of simple loops (see \cite{Ach,BB,Na,Ga}). In \cite{Ga}, a class of non-associative simple Bol loops of exponent $2$ was constructed. The smallest loop in this class, which is also the smallest non-associative simple loop in ${\bf B}_2$ (\cite[Theorem 3]{BB}), has order $96$.

In this paper, we give a construction of free objects in the variety ${\bf B}_2$. Let $B(X)$ be the free Bol loop of exponent two with free set of generators $X$. We construct a subset $R(X)$ of $B(X)$ such that every element $b\in B(X)\setminus \{1\}$ has the canonical form  $b=(...(b_1b_2)b_3...)b_m)b_{m-1})...)b_2)b_1$, where $b_i\in R(X)$ and $b_i\not = b_{i+1}$, for all $i$, and then we describe the multiplication law of $B(X)$ based on this form.  Furthermore, we prove that the nuclei and the center of $B(X)$ are trivial.

\section{Preliminaries}

Let $L$ be a loop and $x\in L$. The bijections $L_x,R_x: L \to L$ defined by $(y)L_x = xy$ and $(y)R_x = yx$ are called the \emph{left and right translations} of $x$ in $L$, respectively. The \emph{right multiplication group} of $L$ is the group $Mlt_r(L) = \langle R_x\,|\,x\in L \rangle$ and the \emph{right inner mapping group} of $L$ is $Inn_r(L) =  \{ \phi \in Mlt_r(L)\,|\, (1)\phi = 1\}$. The subgroup $Inn_r(L)$ of $Mlt_r(L)$ is \emph{core-free}, i.e., the only subgroup of $Inn_r(L)$ that is normal in $Mlt_r(L)$ is the trivial subgroup $\{I_d\}$, where $I_d$ is the identity mapping of $L$.

The \textit{left}, \textit{middle} and \textit{right nuclei} of $L$, denoted respectively by $ N_ \lambda (L), N_ \mu (L) $ and $ N_ \rho (L) $, are defined by: 

\begin{center}
$N_\lambda(L) = \{a\in L\,|\,\, a(xy) = (ax)y \,\, \forall \,\, x,y \in L\}$,
\\
$N_\mu(L) = \{a\in L\,|\,\, x(ay) = (xa)y \,\, \forall \,\, x,y \in L\}$,
\\
$N_\rho(L) = \{a\in L\,|\,\, x(ya) = (xy)a \,\, \forall \,\, x,y \in L\}$.
\end{center}

The \textit{nucleus} of $L$ is defined by $N(L) = N_\lambda(L) \cap N_\mu(L) \cap N_\rho(L)$ and the \textit{center} of $L$ is the set $\mathcal{Z}(L) = \{a\in N(L)\,|\,\, ax = xa \,\, \forall \,\, x \in L\} $. The nuclei of $L$ are subgroups of $L$ and the center of $L$ is an abelian subgroup of $L$.

Bol loops are loops that satisfy the identity \eqref{bol}. This class of loops contains Moufang loops and groups. Furthermore, Bol loops are power-associative and right alternative, and have the right inverse property. Other basic facts from loop theory and Bol loops can be found in \cite{B71,P90}.

The Baer correspondence (\cite{Ba}) is an important tool in the study of Bol loops (cf. \cite{Ach}). From it, we obtain that Bol loops are related to twisted subgroups, as we can see in the next proposition. A subset $K$ of a group $G$ is called a \emph{twisted subgroup} of $G$ if $1\in K$ and $x^{-1},xyx \in K$, for all $x,y\in K$.

\begin{prop}
\label{prop1}(\cite[Proposition $5.2$]{FKP06})
Let $(G,H,B)$ be a Baer triple, i.e., $G$ is a group, $H$ is a subgroup of $G$ and $B$ is a right transversal of $H$ in $G$. If $B$ is a twisted subgroup of $G$, then $B$ with the operation $*$ defined by 

\begin{equation}
\label{eqprop1}
b*b' = c, \textrm{ where } bb' = hc, \textrm{ for some } h\in H,
\end{equation}

is a Bol loop. Conversely, if $(B,*)$ is a Bol loop and $H$ is core-free, then $B$ is a twisted subgroup of $G$.
\end{prop}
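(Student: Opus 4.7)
The plan is to translate both directions into computations inside $G$ using the projection $\rho : G \to B$ that sends $g = hb$ (with $h \in H$, $b \in B$) to $b$. This is well-defined by uniqueness of the transversal decomposition, satisfies $\rho(hg) = \rho(g)$ for every $h \in H$, and, by definition, $b * b' = \rho(bb')$. We may assume $1 \in B$, so that $1$ is simultaneously the group identity and the loop identity.

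For the forward direction, I would first verify that $(B,*)$ is a loop. Unique solvability of $a * x = c$ and $y * a = c$ follows from the bijectivity of left and right translation in $G$ together with the uniqueness of the $H$--$B$ factorization; the identity is $1 \in B$; and $b^{-1} \in B$ (by the twisted-subgroup axiom) serves as a two-sided loop inverse. The decisive computation is the pointwise identity $(y*z)*y = yzy$ in $G$, valid for all $y,z \in B$: writing $yz = h_1 (y*z)$ with $h_1 \in H$, one has $(y*z) \cdot y = h_1^{-1} yzy$, and since $yzy \in B$ by the twisted-subgroup property, this is already an $H$--$B$ factorization, so $\rho((y*z) \cdot y) = yzy$. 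With this in hand, the left side of \eqref{bol} becomes $x * (yzy) = \rho(x \cdot yzy) = \rho(xyzy)$; the right side unwinds analogously through $\rho((x*y) \cdot z) = \rho(xyz)$ and $\rho(((x*y)*z) \cdot y) = \rho(xyzy)$, so the two agree.

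For the converse, assume $(B,*)$ is a Bol loop and $H$ is core-free. I expect the main obstacle to be the closure $yzy \in B$. Set $\eta(y,z) := yzy \cdot ((y*z)*y)^{-1} \in H$ and put $u := (y*z)*y \in B$. Unwinding \eqref{bol} via $\rho$, the left side is $\rho(xu)$ while the right side equals $\rho(xyzy) = \rho(x \cdot \eta(y,z) \cdot u)$. Equating these forces $x \eta(y,z) x^{-1} \in H$ for every $x \in B$. Combined with $h H h^{-1} = H$ for $h \in H$ and the factorization $G = HB$, this yields $g \eta(y,z) g^{-1} \in H$ for every $g \in G$, so $\eta(y,z)$ lies in the core of $H$; core-freeness then forces $\eta(y,z) = 1$, whence $yzy = (y*z)*y \in B$. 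The remaining axiom $b^{-1} \in B$ is obtained similarly: unwinding the right inverse property $(a*b)*b^{(-1)} = a$ of Bol loops yields $a b b^{(-1)} \in Ha$, so $b b^{(-1)} \in a^{-1} H a$ for every $a \in B$; together with $b b^{(-1)} \in H$ (from $b * b^{(-1)} = 1$) and $G = HB$, this places $b b^{(-1)}$ in the core of $H$, forcing $b b^{(-1)} = 1$ and hence $b^{(-1)} = b^{-1} \in B$.
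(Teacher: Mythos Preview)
The paper does not supply its own proof of this proposition; it is quoted verbatim from \cite[Proposition~5.2]{FKP06} and used as a black box. So there is no in-paper argument to compare against, and your outline stands on its own.

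Your plan is essentially the standard one and is correct in its main computations: the identity $(y*z)*y = yzy$ is the hinge for both directions, and your core-freeness argument in the converse (showing $x\eta(y,z)x^{-1}\in H$ for all $x\in B$, then bootstrapping via $G=HB$) is exactly right; the same trick for $bb^{(-1)}$ is fine. One point deserves more care in the forward direction: unique solvability of $y*a=c$ really does follow from $B$ being a right transversal (the solution set is the right coset $Hca^{-1}$), but unique solvability of $a*x=c$ does \emph{not} follow from the transversal property alone---the candidate set $a^{-1}Hc$ is not a right coset of $H$. You need the twisted-subgroup hypothesis here too: for instance, observe that $x\mapsto a^{-1}xa^{-1}$ is a bijection of $B$ onto itself and that $a*(a^{-1}xa^{-1})=\rho(xa^{-1})=x*a^{-1}$, so $L_a^*$ factors as a composite of bijections. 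As written, your sentence ``follows from the bijectivity of left and right translation in $G$ together with the uniqueness of the $H$--$B$ factorization'' glosses over this.
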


If $L$ is a loop, the triple $(G,H,B)$, where $G = Mlt_r(L)$, $H = Inn_r(L)$ and $B = \{R_x\,|\,x\in L\}$, is a Baer triple. In this condition, $L$ is a Bol loop if and only if $B$ is a twisted subgroup of $G$ \cite[6.1]{Ach}.

Let $B$ be a Bol loop of exponent $n$ and $X$ be a subset of $B$. We say that $X$ is a \emph{free set of generators} of $B$ if $X$ genetares $B$ and every mapping between $X$ and a Bol loop $B'$ of exponent $n$ can be extended to a homomorphism between $B$ and $B'$. We say that $B$ is a \emph{free Bol loop of exponent $n$} if it has a free set of generators.

Now consider $B$ as a free Bol loop of exponent two. A subset $T\subset B$ is a \emph{prebasis} of $B$ if for every $b\in B$ there exist $b_1,...,b_n\in T$ such that $b=b_1b_2...b_nb_{n-1}...b_2b_1.$ Here and in the following, we will write $v=v_1v_2....v_n$ if $v=(...((v_1v_2)v_3)...)v_n.$ A subset $T\subset B$ is an \emph{independent}  if for every  $a_1,...,a_m,b_1,...,b_n\in T,$ such that $b_i\not=b_j$ and $a_p\not=a_q,$ for all $i,j,p,q,$ from  $a_1a_2...a_ma_{m-1}...a_2a_1 = b_1b_2...b_nb_{n-1}...b_2b_1$, we have that $n=m$ and $a_i=b_i,$ $i=1,...,n.$ A subset $T\subset B$ is a \emph{basis} of $B$ if $T$ is an independent prebasis of $B$.

A group $G$ is a \emph{free $2$-group} if it is a free product of cyclic groups of order two, i.e., it has the form $G=\prod_{x\in T}\star <x| x^2=1>$.

\section{Construction of a basis of free Bol loops of exponent two}

Let
$X$ be a finite ordered set of letters and 
$P=P(X)$ be the set of all non-associative words on $X.$ We denote the empty word by $1$.
For $v\in P$, by $Sub(v)$ we denote the set of all subwords of $v.$ Note that if 
$v=v_1v_2,$ then $Sub(v)=\{v\}\cup Sub(v_1)\cup Sub(v_2).$ 

For $v\in P$, the lenght of $v$, denoted by $|v|$, is the number of letters in the word $v$. Note that $|1| = 0$.

Let $C(X) = \{uu,(uv)v\,|\, u,v\in P\}$ and $W=W(X) = \{v\in P\,|\, Sub(v)\cap C(X) = \emptyset\}$. Define the mapping $\pi: P \to W$, where, for $v\in P$, $\pi(v)$ is given by induction on $|v|$ using the following rules:

(i) $\pi(x) = x$, if $x\in X$,
\\
(ii) If $u,v\in W$, then

\begin{center}
$\pi(uv) = \left\{\begin{array}{rl}
1, & \textrm{if } u = v, \\
a, & \textrm{if } u = av,\\
uv, & \textrm{if } uv\in W,
\end{array}\right.$
\end{center}

(iii) If  $u\not\in W$ or $v\not\in W$, then
 $\pi(uv) = \pi(\pi(u)\pi(v))$. 
 
 Notice that in the case (iii) we get
 $|\pi(u)\pi(v)|<|uv|$. Hence this definition is correct.

\begin{lem} \label{lema1d} Let $u,v,w,v_1,...,v_n\in P$ and $a \in W$. Then:

(a) $\pi(uv) = \pi(\pi(u)\pi(v))$.\\
(b) $\pi(uv.v)=\pi(u.vv) = \pi(u)$.\\
(c) $\pi(u) = \pi(v)$ if and only if $\pi(uw) = \pi(vw)$.\\
(d) If $\pi(u v_1v_2...v_n) = \pi(v)$, then $\pi(u) = \pi(v v_n...v_2v_1)$.\\
(e) If $\pi(v_1v_2...v_n) = a$, then $\pi(a v_n...v_2v_1) = 1$.\\
(f) If $\pi(uv) = \pi(uw)$, then $\pi(v) = \pi(w)$.
\end{lem}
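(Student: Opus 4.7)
The plan is to prove the six parts in the order (a)--(f), each building on the previous. First, record the auxiliary observation that $\pi$ fixes $W$ pointwise: this follows by induction on word length, since any $w = uv \in W$ has $u, v \in W$ (as $Sub$ is downward-closed under taking subwords and $W$ excludes exactly the bad subword shapes), so rule~(ii) returns $uv$ via its last branch. With this in hand, part~(a) is immediate: if both $u, v \in W$ then $\pi(u) = u$ and $\pi(v) = v$, while otherwise (a) is literally rule~(iii).

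For (b), writing $v' = \pi(v)$ and using (a), $\pi(vv) = \pi(v'v') = 1$ via branch~(ii.1), so $\pi(u \cdot vv) = \pi(\pi(u) \cdot 1) = \pi(u)$. For $\pi(uv \cdot v)$, set $u' = \pi(u)$ and split on which branch of~(ii) produces $\pi(u'v')$; a short check in each branch returns $u'$ (the ``$u' = av'$'' branch uses that $av' = u' \in W$, so rule~(ii) returns $av' = u'$ when multiplied on the right by $v'$ again via branch~(ii.2)). Part~(c) then follows: the forward direction is (a), and the backward direction chains (a) and (b) into
$\pi(u) = \pi(uw \cdot w) = \pi(\pi(uw)\,\pi(w)) = \pi(\pi(vw)\,\pi(w)) = \pi(vw \cdot w) = \pi(v)$.

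Part~(d) proceeds by induction on $n$. The base case $n = 1$ is $\pi(u) = \pi(uv_1 \cdot v_1) = \pi(\pi(v)\,v_1) = \pi(vv_1)$ via (b) and (a). For $n > 1$, view $uv_1 \cdots v_n$ as $(uv_1 \cdots v_{n-1}) v_n$, apply the base case to obtain $\pi(uv_1 \cdots v_{n-1}) = \pi(vv_n)$, and invoke the inductive hypothesis with $vv_n$ in place of $v$. Part~(e) then follows from (d) applied with first factor $v_1$ and tail $v_2, \ldots, v_n$, giving $\pi(v_1) = \pi(a v_n \cdots v_2)$; right-multiplying both sides by $v_1$ via (c) yields $\pi(v_1 \cdot v_1) = \pi(a v_n \cdots v_1)$, and the left side is $1$.

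Part~(f) is the most delicate. First, (a) lets me replace $u, v, w$ by $\pi(u), \pi(v), \pi(w)$ and (c) transfers the hypothesis and conclusion, so assume $u, v, w \in W$. Then $\pi(uv) = \pi(uw)$ produces a $3 \times 3$ case split according to which branch of rule~(ii) generates each side. All six ``cross'' pairings are eliminated by short length or emptiness arguments: for instance, $\pi(uv) = a$ (so $u = av$) combined with $\pi(uw) = uw$ forces $a = uw$, whence $|u| = |uw| + |v| > |u|$, absurd; the cases involving an output equal to $1$ are ruled out by noting $1$ is the empty word and the other two outputs are nonempty. On each of the three ``diagonal'' pairings the conclusion $v = w$ is immediate, either directly from $u = v = w$ or from the uniqueness of the outer product decomposition of a non-letter word in $P$. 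The main obstacle is bookkeeping: tracking that $1$ is genuinely distinct from every nonempty output of rule~(ii) and that the three branches of~(ii) are mutually exclusive, so the case analysis closes cleanly.
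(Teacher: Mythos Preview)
Your argument is correct and matches the paper's approach: (a) from the definition, (b) by cases on rule~(ii), (c)--(e) as formal consequences of (a) and (b), and (f) by reducing to $u,v,w\in W$ and analysing the branches of rule~(ii). The only cosmetic difference is in (f): the paper first sets aside the trivial cases $1\in\{u,v\}$ and $u=v$ and then treats just the two remaining possibilities $u=cv$ and $uv\in W$, whereas you run the full $3\times 3$ grid---the same content, organised slightly differently.
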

\begin{proof} The item (a) follows from the definition of $\pi$. The item (c) is a consequence of (a) and (b), and the items (d) and (e) are consequences of (b) and (c). Let us prove (b) and (f).

(b) By (a), we have $\pi(uv.v) = \pi(\pi(\pi(u)\pi(v))\pi(v))$ and $\pi(u.vv) = \pi(u)$. If $\pi(u)\pi(v) \in W$, then $\pi(\pi(\pi(u)\pi(v))\pi(v)) = \pi((\pi(u)\pi(v))\pi(v)) = \pi(u)$. If $\pi(u) = c\pi(v)$, then $\pi(\pi(\pi(u)\pi(v))\pi(v)) =  \pi(c\pi(v)) = \pi(u)$.

(f) By (a), we only have to prove the case where $u,v,w\in W$.  If either $1\in \{u,v\}$ or $u = v$, the result is trivial. Suppose that $u,v\in W\setminus \{1\}$ and $u\not = v$. If $u = cv$, for some $c\not = 1$, then $\pi(uw) =  c$. Since $|c|<|u|$, we have $uw\not \in W$. Thus $u = dw$, for some $d\not = 1$, and we have $cv = u = dw$. Therefore $w = v$.

Now suppose that $uv \in W$. Since $|uv|>|u|$, it follows that $uw\in W$. Hence $uw = uv$ and we have $w = v$.
\end{proof}

\begin{lem}\label{0}
Let $v=v_1v_2v_3...v_m,$ where $v_i\in W$ and $v_i\not=v_{i+1},$ for
$i=1,...,m-1.$ If $|\pi(v)|<|v|,$ then there are three possibilities:

(a) $v_1v_2\in W$ and $v_i=v_1v_2v_3...v_{i-1}$, for some $i>2$,
\\
(b) There exists $v^{\prime}_{1}\in W$ such that $v_{1}=v^{\prime}_{1}v_{j}v_{j-1}...v_{ 3}v_{2},$ where $1<j<m$ and $ v^{\prime}_{1}v_{j+1}\in W,$
\\
(c) $v_1 = v^{\prime}_{1} v_mv_{m-1}...v_2$, for some $v^{\prime}_{1}\in W$.

\end{lem}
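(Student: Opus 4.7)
The plan is to compute $\pi(v)$ iteratively by setting $w_1:=v_1$ and $w_k:=\pi(w_{k-1}v_k)$ for $2\le k\le m$, so that by Lemma~\ref{lema1d}(a) we have $w_m=\pi(v)$. The hypothesis $|\pi(v)|<|v|$ forces a length drop somewhere, so there is a smallest $k^*\in\{2,\ldots,m\}$ with $w_{k^*-1}v_{k^*}\notin W$; for every $k<k^*$ we still have $w_k=v_1 v_2\cdots v_k\in W$. The argument then splits on whether $k^*\ge 3$ (leading to (a)) or $k^*=2$ (leading to (b) or (c)).

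Case $k^*\ge 3$: by the definition of $\pi$, the failure $w_{k^*-1}v_{k^*}\notin W$ leaves only two possibilities: either $w_{k^*-1}=v_{k^*}$, or $w_{k^*-1}=a v_{k^*}$ as non-associative words for some $a\in W\setminus\{1\}$. In the second alternative, the outermost factorization $w_{k^*-1}=(v_1\cdots v_{k^*-2})\cdot v_{k^*-1}$ is unique, which forces $v_{k^*}=v_{k^*-1}$, contradicting the hypothesis $v_i\ne v_{i+1}$. Therefore $w_{k^*-1}=v_{k^*}$, i.e.\ $v_{k^*}=v_1 v_2\cdots v_{k^*-1}$, and $v_1 v_2\in W$ since it is a subword of $w_{k^*-1}\in W$. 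This is exactly case (a) with $i=k^*$.

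Case $k^*=2$: then $v_1 v_2\notin W$, and since $v_1\ne v_2$ we must have $v_1=v_1' v_2$ as non-associative words for some $v_1'\in W$. I then \emph{peel} as far as possible: let $j$ be the largest index in $\{2,\ldots,m\}$ for which $v_1$ admits a representation $v_1=u\,v_j v_{j-1}\cdots v_2$ with $u\in W$ (possibly $u=1$). Such a $j$ exists since $j=2$ works with $u=v_1'$. If $j=m$ we obtain (c) with $v_1':=u$. Otherwise $j<m$, and I claim $u v_{j+1}\in W$, which delivers (b). Indeed, if $u v_{j+1}\notin W$ then either $u=v_{j+1}$ or $u=b v_{j+1}$ for some $b\in W$; substituting back, $v_1$ becomes $v_{j+1}v_j\cdots v_2$ or $b\,v_{j+1}v_j\cdots v_2$, either of which contradicts the maximality of $j$.

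The main obstacle is the careful use of the uniqueness of the outermost factorization of a non-associative word: it is the key tool both for deriving the contradiction $v_{k^*}=v_{k^*-1}$ when $k^*\ge 3$, and for certifying the termination of the peeling procedure together with the condition $v_1' v_{j+1}\in W$ when $k^*=2$. Once this uniqueness is carefully tracked, the remainder is routine bookkeeping on the indices.
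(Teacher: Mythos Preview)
Your argument is correct and follows essentially the same route as the paper's proof: both split on whether $v_1v_2\in W$, locate the first index where the left-normed product leaves $W$, and in the second branch peel off right factors $v_2,v_3,\ldots$ from $v_1$ up to a maximal $j$. Your write-up is simply more explicit about the iterative definition $w_k=\pi(w_{k-1}v_k)$ and about invoking uniqueness of the outermost factorization of a non-associative word; the paper compresses these into one-line implications (and even contains a small typo, writing $v_i\ne v_{i+1}$ where $v_{i-1}\ne v_i$ is meant), but the underlying reasoning is the same.
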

\begin{proof} If $v_1v_2\in W$, then there exists $i\in \{3,...,m\}$ such that $v_1v_2...v_{i-1}\in W$ and $v_1v_2...v_{i-1}v_i\not \in W$. Since $v_i\not=v_{i+1},$ we have $v_i=v_1v_2v_3...v_{i-1}$. When $v_1v_2\not \in W$ we have that $v_1 = \alpha v_2$, for some $\alpha \in W$. If $v_1 \not = \beta v_mv_{m-1}...v_2$, for every $\beta \in W$, then there exist $v^{\prime}_{1}\in W$ and $j\in \{2,3,...,m-1\}$ such that $v_{1}=v^{\prime}_{1}v_{j}v_{j-1}...v_{ 3}v_{2}$ and $v^{\prime}_{1}\not = \gamma v_{j+1}$, for every $\gamma \in W$. Hence $ v^{\prime}_{1}v_{j+1}\in W.$
\end{proof}

{\bf Remark.} In the Lemma \ref{0} it is possible that $v_1^{\prime}=1.$\\

The following result is a consequence of Lemma \ref{0}.

\begin{cor}\label{00} Let $v=v_1v_2v_3...v_m,$ where $v_i\in W$ and $v_i\not=v_{i+1},$ for
$i=1,...,m-1.$ There are four possibilities:

(a) $\pi(v)=1$,

(b) $\pi(v)=v_lv_{l+1}...v_m$, where $\pi(v_1v_2...v_{l-1})=1$ and $1\leq l \leq m$,

(c) $\pi(v)=v_{l}^{\prime}v_{j+1}v_{j+2}...v_m$, where $\pi(v_1v_2...v_{l-1})=1, v_l = v_{l}^{\prime}v_{j}v_{j-1}...v_{l+1}$, $v_{l}^{\prime}\not = 1$ and $1\leq l < j < m$,

(d) $\pi(v)=v_{l}^{\prime}$, where $\pi(v_1v_2...v_{l-1})=1, v_l = v_{l}^{\prime}v_{m}v_{m-1}...v_{l+1}$, $v_{l}^{\prime}\not = 1$ and $1\leq l < m$.
\end{cor}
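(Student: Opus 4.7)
My plan is induction on $m$. The cases $m=0$ and $m=1$ are immediate: they yield $\pi(v)=1$ (case (a)) and $\pi(v)=v_1$ (case (b) with $l=1$, interpreting the empty prefix as having $\pi$-value $1$). For the induction step I split on whether $|\pi(v)|=|v|$ or $|\pi(v)|<|v|$. In the first situation, no cancellation occurs during the left-to-right computation of $\pi(v)$, so $\pi(v)=v_1v_2\dots v_m$, matching case (b) with $l=1$. In the second, Lemma \ref{0} applies.

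A standing tool is the iteration identity
\[
\pi(v_1v_2\dots v_m)=\pi\bigl(\pi(v_1\dots v_k)\,v_{k+1}\dots v_m\bigr),
\]
obtained by iterating Lemma \ref{lema1d}(a) along the left-associated product. Consequently, whenever a prefix $v_1\dots v_k$ satisfies $\pi(v_1\dots v_k)=1$, the computation of $\pi(v)$ reduces to that of $\pi(v_{k+1}\dots v_m)$, a strictly shorter problem.

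In subcase (a) of Lemma \ref{0}, the equality $v_i=v_1\dots v_{i-1}$ yields $\pi(v_1\dots v_i)=1$ directly from rule (ii); the iteration identity then reduces $\pi(v)$ to $\pi(v_{i+1}\dots v_m)$, to which induction applies, the output shifting to the corollary's case (a), (b), (c) or (d) with the index $l$ increased by $i$. In subcase (c), where $v_1=v_1'v_mv_{m-1}\dots v_2$, a step-by-step reduction via rule (ii) gives $\pi(v_1v_2)=v_1'v_m\dots v_3$, $\pi(v_1v_2v_3)=v_1'v_m\dots v_4$, and inductively $\pi(v_1\dots v_k)=v_1'v_m\dots v_{k+1}$; the terminal step yields $\pi(v)=v_1'$, which is case (a) if $v_1'=1$ and case (d) with $l=1$ otherwise. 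Subcase (b) proceeds by the same pattern: the reduction gives $\pi(v_1\dots v_j)=v_1'$, and since $v_1'v_{j+1}\in W$ also $\pi(v_1\dots v_{j+1})=v_1'v_{j+1}$; the iteration identity then gives $\pi(v)=\pi(v_1'\,v_{j+1}v_{j+2}\dots v_m)$ (or $\pi(v_{j+1}\dots v_m)$ when $v_1'=1$). Applying induction to the strictly shorter sequence and translating its output back yields one of (a)--(d) for $v$.

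The principal obstacle I expect is the index bookkeeping in subcase (b) of Lemma \ref{0}: each of the four possible outputs of the induction hypothesis on the reduced sequence must be matched with one of the corollary's four forms for $v$. The delicate points are distinguishing whether the leading factor of the output is $v_1'$ (landing in the corollary's case (c) or (d) with $l=1$) or one of the original $v_i$ with $i>j$ (landing in case (b), (c) or (d) with $l>j$), and whether the boundary value $v_1'=1$ forces an additional shift in the index $l$. Once the enumeration is settled, preservation of the prefix condition $\pi(v_1\dots v_{l-1})=1$ is automatic from the iteration identity together with the established collapses $\pi(v_1\dots v_j)=v_1'$ or $\pi(v_1\dots v_i)=1$.
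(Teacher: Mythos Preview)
Your inductive argument is correct and is precisely the kind of unfolding the paper has in mind: the paper does not give a proof of this corollary at all, stating only that it ``is a consequence of Lemma~\ref{0}.'' Your induction on $m$, splitting on $|\pi(v)|=|v|$ versus $|\pi(v)|<|v|$ and invoking Lemma~\ref{0} in the latter case, together with the iteration identity from Lemma~\ref{lema1d}(a), is the natural way to make that one-line claim rigorous. The index bookkeeping you flag in subcase~(b) of Lemma~\ref{0} is indeed the only real work, and your outline of it (distinguishing $l'=1$, where the induction refines the decomposition of $v_1'$ and hence of $v_1$, from $l'>1$, where the new pivot is some $v_i$ with $i>j$) is accurate; the prefix condition $\pi(v_1\dots v_{l-1})=1$ transfers exactly as you say via $\pi(v_1\dots v_j)=v_1'$ and the iteration identity.
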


Consider $X = \{x_1,x_2,...,x_r\}$.  We define an order $>$ in $W$ inductively by the following rules:

(i) $x_i > x_j$, if $i>j$,
\\
(ii) $u>v$, if $|u|>|v|$,
\\
(iii) If $|u|=|v|$, $u = u_1u_2$, $v =v_1v_2$, then $u>v$ in the following cases:
\\
(iii.1) $u_2 > v_2$, 
\\
(iii.2) $u_2 = v_2$ and $u_1 > v_1$.

\begin{defi}\label{g1}
For any $y\in P$ there exists unique canonical decomposition
$y=y_1y_2...y_{m-1}y^{\prime}_m$ such that $|y_1|=1.$ We denote
$y^t=y^{\prime}_my_{m-1}...y_1.$ If 
$y^{\prime}_m=y_ky_{k-1}...y_m$ with $|y_k|=1,$ then  $(y^{t})^{t}=y^{tt}=y_1y_2...y_m....y_k$ and $y^{ttt}=y_ky_{k-1}...y_m....y_1=y^t.$
\end{defi}

\begin{defi}\label{g2} In notation above, define the following:

(i) $||y||=m,$

(ii) $y^*=\{x\in P| x^{tt}=y^{t},\,or\,\,x^{tt}=y^{tt}\}.$

(iii) $y_{(i)}=y_ky_{k-1}...y_i(y_1y_2...y_{i-1}), i=3,...,k,$ 

(iv) $ y^{(i)}=y_1y_{2}...y_{i-1}(y_ky_{k-1}...y_{i}), i=2,...,k-1,$
\end{defi}

\begin{ex}
\label{ex1} Let $X = \{a,b,c\}$ and $y=(a(bc))((ca)b).$ Then the canonical decomposition of $y$ is $y = y_1y_2y_3'$, where $y_1=a,$ $y_2=bc,$ $y_3'=(ca)b = y_5y_4y_3$, and hence $||y||=3$ and $||y^t||=||y^{tt}||=5.$ Furthermore, $y^t=(((ca)b)(bc))a$, $y^{tt} = (((a(bc))b)a)c$, $y_{(3)}=y_5y_4y_3(y_1y_2),$ $y_{(4)}=y_5y_4(y_1y_2y_3),$ $y_{(5)}=y_5(y_1y_2y_3y_4),$ $y^{(2)}=y_1(y_5y_4y_3y_2),$ $y^{(3)}=(y_1y_2)(y_5y_4y_3),$ and $y^{(4)}=(y_1y_2y_3)(y_5y_4)$. Note that $y=y^{(3)}$ and \\$y^*=\{y^t,y^{tt}, y_{(3)}, y_{(4)}, y_{(5)},y^{(2)},y^{(3)}, y^{(4)}\}.$
 \end{ex}

Define the set of symmetric words of $P$ by $S(X) = \{y_1y_2...y_my_{m+1}y_m...y_1\,\,|\,\, y_i\in P ,m > 0\} $.

\begin{lem}\label{l1} In notation above, we have:

(a)  $y^*=\{y^{tt}=y^{tttt},y^{t},y_{(i+1)}, y^{(i)}, i=2,...,k-1\}$ and $|\{y^{tt},y^{(i)}| i=2,...,k-1\}| = |\{y^{t},y_{(i)}| i=3,...,k\}| = k-1$.

(b) If $y^{t}=y^{tt},$
then 
$y^*=\{y^{tt}, y^{(i)}=y_{(k-i+2)}, i=2,...,k-1\}$ and
$|y^*|=k-1$.

(c) If $y^{t}\not=y^{tt},$ then $|y^*|=2(k-1)$ and $y^*\cap S(X)=\emptyset.$

(d) If $y^{t},y^{tt}\in W,$ then $y^*\subset W$.

(e) $min\{y^{t}, 
y^{tt}\}=min\{x | x\in y^*\}.$

\end{lem}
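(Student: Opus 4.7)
The plan is to compute $t$ and $tt$ on each candidate element of $y^{*}$ via its canonical decomposition. For (a), a direct calculation shows that $y^{(i)} = (y_1 \ldots y_{i-1})(y_k y_{k-1} \ldots y_i)$ has canonical decomposition with exactly $i$ outer chunks $y_1, \ldots, y_{i-1}, (y_k \ldots y_i)$, so $(y^{(i)})^t = (y_k \ldots y_i) y_{i-1} \ldots y_1 = y^t$ and hence $(y^{(i)})^{tt} = y^{tt}$; symmetrically $(y_{(j)})^{tt} = y^t$, so every listed element lies in $y^{*}$. For the reverse inclusion, one takes $x \in y^{*}$ with canonical decomposition $x = x_1 \ldots x_{p-1} x_p'$ and matches $x^t = x_p' x_{p-1} \ldots x_1$ outer-chunk-by-outer-chunk against $y^t$ (or $y^{tt}$); this forces $x = y^{(p)}$ (respectively $x = y_{(k-p+2)}$) for some $p$, and the $y^{(i)}$'s are distinct because their canonical decompositions have different chunk counts, giving $k-1$ elements in each fiber. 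For (b), the equation $y^t = y^{tt}$ compares the left-associated products $y_k \ldots y_1$ and $y_1 \ldots y_k$ as elements of $P$; uniqueness of outer factorization forces the palindromic identity $y_i = y_{k-i+1}$ for every $i$, and substituting this into the defining formulas gives $y^{(i)} = y_{(k-i+2)}$, collapsing the two families to a single one of size $k-1$.

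For (c), when $y^t \neq y^{tt}$ the two fibers of $tt$ above $y^t$ and $y^{tt}$ are disjoint, so $|y^{*}| = 2(k-1)$ follows from (a). The disjointness $y^{*} \cap S(X) = \emptyset$ reduces to the key claim that every symmetric $x = a_1 a_2 \ldots a_{m+1} \ldots a_2 a_1 \in S(X)$ satisfies $x^{tt} = x^t$; once this is established, $x \in y^{*}$ together with $x^{tt} = x^t$ forces $y^t = y^{tt}$, contradicting the hypothesis. When the outermost-left factor $a_1$ is a letter, the canonical chunks of $x$ literally form a palindrome and $x^t = x$. The harder case is $a_1$ composite with canonical decomposition $b_1 \ldots b_r'$: then both $x$ and $x^t$ unfold $a_1$ into its sub-chunks on the left, and I would verify by direct bookkeeping of chunk positions that the chunk sequence of $x^t$ is palindromic of length $2r + 2m - 1$, forcing $x^{tt} = x^t$. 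This palindromic chunk-matching is the main obstacle of the proof.

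For (d), the recursive identity $Sub(y^{(i)}) = \{y^{(i)}\} \cup Sub(y_1 \ldots y_{i-1}) \cup Sub(y_k \ldots y_i)$ reduces inner subwords to subsets of $Sub(y^{tt})$ and $Sub(y^t)$, which avoid $C(X)$ by hypothesis. To rule out $y^{(i)} \in C(X)$ itself: the form $uu$ forces $y_{i-1} = y_i$, producing the forbidden subword $y_1 \ldots y_i$ of shape $(uv)v$ inside $y^{tt}$; the form $(uv)v$ forces $y_{i-1} = y_k \ldots y_i$, producing $y_k \ldots y_{i-1} = y_{i-1} y_{i-1}$ inside $y^t$; both contradict the hypothesis, and the $y_{(j)}$ case is symmetric. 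For (e), all elements of $y^{*}$ have the same total length $|y|$, so the order reduces to comparing outer right factors $u_2$: both $y^{tt}$ and $y^t$ have $u_2$ equal to a single letter ($y_k$ or $y_1$), while every $y^{(i)}$ with $2 \leq i \leq k-1$ and every $y_{(j)}$ with $3 \leq j \leq k$ has composite $u_2$ of length $\geq 2$; the order rule then forces the minimum of $y^{*}$ to lie in $\{y^t, y^{tt}\}$.
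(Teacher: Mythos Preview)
Your argument is correct and follows essentially the same route as the paper for parts (a), (b), (d), and (e): both proofs work by reading off the canonical chunk sequence of each candidate, comparing outermost right factors, and using that $y^{tt}$ and $y^t$ are the fully left-associated words $y_1y_2\cdots y_k$ and $y_ky_{k-1}\cdots y_1$. Your observation that $(y^{(i)})^t=(y_k\cdots y_i)y_{i-1}\cdots y_1$ literally equals $y^t$ as a left-associated word in $P$ is exactly what makes the forward inclusion in (a) immediate, and your reverse-inclusion matching is the paper's argument phrased in terms of $x^t$ rather than $x^{tt}$ (equivalent via $x^{ttt}=x^t$).

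The one place you diverge is the second claim of (c), $y^*\cap S(X)=\emptyset$. The paper disposes of this with ``by a similar argument'' to the first half of (c), i.e.\ one checks directly that if some $y^{(i)}$ or $y_{(j)}$ were symmetric then matching chunks would force the palindromic relation $y_l=y_{k+1-l}$ and hence $y^t=y^{tt}$. You instead prove the general lemma that every $x\in S(X)$ satisfies $x^{tt}=x^t$, by showing the chunk sequence of $x^t$ is palindromic of length $2r+2m-1$ once the outer $a_1$ is unfolded; then $x\in y^*$ together with $x^{tt}=x^t$ forces $y^t=y^{tt}$. This is a legitimately different packaging: your lemma is a clean standalone fact about $S(X)$ that makes the implication transparent, at the cost of the chunk-bookkeeping you flag as the main obstacle. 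The paper's direct check avoids introducing the auxiliary lemma but is less self-contained. Both approaches are short once written out; neither has a real advantage in difficulty.
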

\begin{proof}
(a) It is immediate that $\{y^{t},y^{tt},y_{(i+1)}, y^{(i)}, i=2,...,k-1\}\subset y^*$ and $|\{y^{tt},y^{(i)}| i=2,...,k-1\}| = |\{y^{t},y_{(i)}| i=3,...,k\}| = k-1$.

Let $z\in y^*$. We have that $z^{tt} = z_1z_2...z_r$, where $z = z_1z_2...z'_l$, $z'_l = z_rz_{r-1}...z_l$ and $|z_1| = |z_r| = 1$. Since $|z_1| = |z_r| = 1$ and $z^{tt} \in \{y^{t},y^{tt}\}$, we have $k = r$. If $z^{tt} = y^{tt}$, then $z_i = y_i$, for all $i$, and so $z\in \{y^{tt},y^{(i)}| i=2,...,k-1\}$. If $z^{tt} = y^{t}$, then $z_{k+1-i} = y_i$, for all $i$, and so $z\in \{y^{t},y_{(i)}| i=3,...,k\}$. Therefore $y^*=\{y^{t},y^{tt},y_{(i+1)}, y^{(i)}, i=2,...,k-1\}$.

(b) If $y^{t}=y^{tt},$ then $y^{(i)}=y_{(k-i+2)}$, for all $i\in \{2,...,k-1\}$. Thus the claim follows from (a).

(c) If $y_{(i)} = y^{(j)}$, for some $i$ and $j$, then a simple calculation shows that $j = k-i+2$ and $y_l = y_{k+1-l}$, for all $l$, and so $y^{t}=y^{tt}$. Hence $|y^*|=2(k-1)$ by (a). By a similar argument, we can get that $y^{(i)},y_{(i+1)} \not \in S(X)$, for all $i$.

(d) Let $y^{(i)} = y_1y_{2}...y_{i-1}(y_ky_{k-1}...y_{i}) \in y^*$. Since $y^{t},y^{tt}\in W,$ we have $y_1y_{2}...y_{i-1},y_ky_{k-1}...y_{i} \in W$, $y_{i-1}\not = y_i$ and $y_{i-1}\not = y_ky_{k-1}...y_{i}$. Then $y_1y_{2}...y_{i-1}\not = \alpha (y_ky_{k-1}...y_{i})$, for all $\alpha \in P$. Hence $y^{(i)}\in W$. By similar arguments, we can conclude that $y_{(j)}\in W$, for all $j$. Therefore, $y^*\subset W$.

(e) It is clear that $y^{tt} = min\{y^{tt},y^{(i)}| i=2,...,k-1\}$ and $y^{t} = min\{y^{t},y_{(i)}| i=3,...,k\}$. Thus the claim follows from (a).
\end{proof}

\textbf{Remark.} We can define an equivalence relation $\sim$ on $P(X)$ by $x\sim y$ if and only if $x^* \cap y^* \not = \emptyset$. The equivalence classes of this relation can be of three types: $O_1$, $O_2$ and $O_3$, where:

(i) $O_1\subset W(X)\setminus S(X)$,

(ii) $O_2\subset W(X)$ and $y^t = y^{tt}\in S(X)$, for $y\in O_2$,

(iii) $O_3\not \subset W(X)$.

\begin{defi}\label{d1} For $y\in W$, let $y_0=min\{y^{t}, 
y^{tt}\}$. Define the set $D=D(X)=\{y_0 | y,y_0,y_0^t\in W, y_0^t\not=y_0 \}$. 
\end{defi}

\begin{ex}
\label{ex2} If $X = \{a,b\}$ with $b>a$ and $W_n = \{y\in W\,|\,|y| = n\}$, then 

$D\cap W_5=\{a,b,ba,((ba)b)a,(b(ab))a,(b(ba))a,((ba)(ab))a,((a(ba))b)a,((b(ab))a)b,$

$((b(ba))b)a,(b(a(ab)))a,(b(a(ba)))a,(b(b(ab)))a,(b(b(ba)))a,(b((ab)a))a,(b((ba)b))a\}.$
\end{ex}

\begin{defi}\label{d2} Define the following sets:

(i) $R_1 = X=\{x_1,x_2,...,x_r\}$,

(ii) $R_n = R_{n-1} \cup \{y\in D(X)\,\,|\,\,|y|\leq n, y = u_1u_2...u_m, u_i\in R_{n-1}, i=1,...,m\}$, for $n>1$, 

(iii) $R(X) = \displaystyle \bigcup_{n\in \mathbb{N}} R_n$.

\end{defi}

Notice that $X\subset R(X)\subset W$  and $R(X)\cap S(X) = \emptyset$.

\begin{cor}\label{l111}  Let $b=b_1b_2...b_n\in W,$ be such that $b_1\in X$. If $b\in R(X)$, then 

 \begin{equation}\label{meq}
 b<b^{t}, b^*\subset W, b_n\in X \textrm{ and } b_i\in R(X),\textrm{ for } i=1,...,n.
 \end{equation}
\end{cor}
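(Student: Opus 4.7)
The approach is induction on $|b|$. The case $|b|=1$ gives $b \in X$ with $n=1$ and $b = b^t$, so all conclusions hold trivially (reading $b < b^t$ as $b \leq b^t$). For $|b| \geq 2$, we have $b \in R(X) \setminus X$, and by Definition~\ref{d2} this forces $b \in D(X)$: there exists $y \in W$ with $b = y_0 = \min\{y^t, y^{tt}\}$, $b, b^t \in W$, and $b \neq b^t$.

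Three of the four conclusions follow directly from $b \in D(X)$, without further induction. Using $y^{ttt} = y^t$ one verifies $b^{tt} = b$, hence $\{b, b^t\} = \{y^t, y^{tt}\}$ as unordered pairs; since $b$ is the smaller element, $b < b^t$. Because $b$ and $y$ share the same $\sim$-equivalence class, $b^* = y^*$, and Lemma~\ref{l1}(d) applied to $y$ (whose $y^t$ and $y^{tt}$ both lie in $W$) gives $b^* \subset W$. Finally, $b_n$ is the outermost right factor of $b$, which is intrinsic to $b$ as a non-associative word; from the explicit forms $y^t = y^{\prime}_m y_{m-1} \dots y_1$ and $y^{tt} = y_1 y_2 \dots y_k$ in Definition~\ref{g1}, this right factor is either $y_1$ or $y_k$, both in $X$ by the clauses $|y_1| = |y_k| = 1$.

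The induction is needed only for the conclusion $b_i \in R(X)$. Let $k \geq 2$ be minimal with $b \in R_k$. Definition~\ref{d2} then supplies a decomposition $b = u_1 u_2 \dots u_m$ with $u_i \in R_{k-1} \subset R(X)$ and $m \geq 2$ (otherwise $b = u_1 \in R_{k-1}$, contradicting minimality of $k$); in particular $|u_1| < |b|$. The inductive hypothesis applies to $u_1$ and yields a canonical decomposition $u_1 = v_1 v_2 \dots v_l$ (with $v_1 \in X$) in which every $v_j \in R(X)$. Substituting gives $b = v_1 v_2 \dots v_l u_2 \dots u_m$; by uniqueness of the canonical decomposition with leading letter in $X$, this must coincide with $b_1 b_2 \dots b_n$, so every $b_i$ equals some $v_j$ or some $u_i$ and therefore lies in $R(X)$. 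The main obstacle is exactly this alignment step: the $R_k$-decomposition is not a priori canonical, so one must combine the induction on $u_1$ with uniqueness of the canonical form (and the intrinsic identification of $b_n$ as the outermost right factor of $b$) to make the pieces match.
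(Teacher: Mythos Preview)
The paper states Corollary~\ref{l111} without proof, so there is no argument to compare against; your proposal correctly supplies the details the authors left implicit. Your observation about the base case is accurate: as written, $b<b^t$ fails for $b\in X$ (where $b^t=b$), so the strict inequality must either be read as $b\le b^t$ or the corollary tacitly restricted to $|b|\ge 2$. Since the later uses of \eqref{meq} in the paper invoke only $b^*\subset W$, $b_n\in X$, or the consequence that two distinct elements of the same $*$-class cannot both lie in $R(X)$, this defect in the statement is harmless.

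Your deductions of $b<b^t$, $b^*\subset W$, and $b_n\in X$ from $b\in D(X)$ are exactly what the authors presumably intended: the key point is $b^{tt}=b$, so that $\{b,b^t\}=\{y^t,y^{tt}\}$, after which Lemma~\ref{l1}(d),(e) do the work. The inductive step for $b_i\in R(X)$ is also correct. The alignment you describe can be made slightly more explicit by noting that in any left-nested product the rightmost factors are intrinsic to the non-associative word, so from $b=u_1u_2\cdots u_m=b_1b_2\cdots b_n$ one reads off $u_m=b_n$, $u_{m-1}=b_{n-1}$, \dots, $u_2=b_{n-m+2}$, and hence $u_1=b_1b_2\cdots b_{n-m+1}$. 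Since $b_1\in X$, this last expression is already the canonical decomposition of $u_1$, and the induction on $|u_1|<|b|$ finishes the argument.
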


\begin{ex}
\label{ex3} If $X = \{a,b\}$ with $b>a$, then: 

$R_5=\{a,b,ba,((ba)b)a,(b(ba))a,((a(ba))b)a, ((b(ba))b)a\}.$

Note that $(b((ba)b))a,(b((ab)a))a\in (D\cap W_5)\setminus R_5,$
since $(ba)b,(ab)a\in S(X)$.

\end{ex}

\begin{defi}\label{d3}
$B(X)=\{1\}\cup \{y\in W(X) | y=y_1y_2...y_n, y_i\in R(X)\}.$
\end{defi}

{\bf Remark.} Let $y= y_1y_2...y_n\in P$ be such that $y_i\in R(X)$, for all $i$. By Lemma \ref{0}, $y\in W$ if and only if $y_1y_2...y_{i-1}\not= y_i\not=y_{i+1},$ for $i\in\{1,...,n-1\}.$

\section{Proof that $R(X)$ is a basis of $B(X)$. }

For proof that $R(X)$ is a basis of $B(X)$ we need the detailed
information about $\pi(b)$ if $b=b_1...b_k...b_1$, $b_i\not=b_{i+1}$
and $b_i\in R(X).$ We begin with the following simple fact.
 
\begin{lem}
\label{prop1d} Let $b_1,b_2,...b_k\in P$. Then $\pi(b_1b_2...b_kb_{k-1}...b_1) = 1$ if and only if $\pi(b_k) = 1$.
\end{lem}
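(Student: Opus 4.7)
\emph{Approach.} My plan is to telescope the palindromic product from the outside in, using Lemma~\ref{lema1d} to strip one letter at a time from the right-hand tail $b_{k-1}\cdots b_1$. For the sufficiency direction I will use (a) and (b) to \emph{absorb} each stripped letter against its twin on the left. For the necessity direction I will use (d) to \emph{move} each stripped letter across, growing a product on the left, and then finish with a short case analysis on the definition of $\pi$.

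\emph{Sufficiency.} Assume $\pi(b_k)=1$. I would prove by descending induction on $j\in\{k-1,\ldots,0\}$ that
\begin{equation*}
\pi(b_1 b_2\cdots b_k\, b_{k-1}\cdots b_{j+1})=\pi(b_1 b_2\cdots b_j),
\end{equation*}
with the empty product interpreted as $1$. The base case $j=k-1$ uses Lemma~\ref{lema1d}(a) together with $\pi(b_k)=1$: $\pi((b_1\cdots b_{k-1})b_k)=\pi(\pi(b_1\cdots b_{k-1})\cdot 1)=\pi(b_1\cdots b_{k-1})$. For the step from $j$ to $j-1$, I would write the left side as $(b_1\cdots b_k b_{k-1}\cdots b_{j+1})\cdot b_j$, apply (a) and the inductive hypothesis to collapse it to $\pi((b_1\cdots b_j)\cdot b_j)$, and finish using Lemma~\ref{lema1d}(b). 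Taking $j=0$ gives $\pi(b_1\cdots b_k\cdots b_1)=1$.

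\emph{Necessity.} Assume $\pi(b_1\cdots b_k\cdots b_1)=1$. I would prove by ascending induction on $j\in\{0,1,\ldots,k-1\}$ that
\begin{equation*}
\pi(b_1\cdots b_k\, b_{k-1}\cdots b_{j+1})=\pi(b_1\cdots b_j),
\end{equation*}
again with the empty product equal to $1$. The case $j=0$ is the hypothesis. For the inductive step I apply Lemma~\ref{lema1d}(d) with $u=b_1\cdots b_k b_{k-1}\cdots b_{j+2}$, $v_1=b_{j+1}$, and $v=b_1\cdots b_j$: the relation $\pi(u\cdot b_{j+1})=\pi(b_1\cdots b_j)$ then yields $\pi(u)=\pi((b_1\cdots b_j)\cdot b_{j+1})=\pi(b_1\cdots b_{j+1})$. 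At $j=k-1$ this reads $\pi((b_1\cdots b_{k-1})\cdot b_k)=\pi(b_1\cdots b_{k-1})$. Setting $a=\pi(b_1\cdots b_{k-1})$ and $c=\pi(b_k)$, both in $W$, Lemma~\ref{lema1d}(a) gives $\pi(ac)=a$. A short case analysis on the three clauses in the definition of $\pi$ on $W\times W$ rules out $c\neq 1$: the clause $a=c$ would force $\pi(ac)=1=a$, so $a=c=1$; the clause $a=dc$ with $d\neq 1$ would force $d=a$ and hence $|a|=|d|+|c|=|a|+|c|$, so $|c|=0$; and the clause $ac\in W$ would give $ac=a$, again forcing $|c|=0$ by length. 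In every scenario $c=1$, so $\pi(b_k)=1$.

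The main subtlety I anticipate is the final case analysis in necessity, which rests critically on the length identity $|uv|=|u|+|v|$ and on the fact that the three clauses of the definition of $\pi$ on $W\times W$ are mutually exclusive and exhaustive. A minor secondary point is the treatment of products by $1$ that appears implicitly in sufficiency; this is handled by the convention that $w\cdot 1$ behaves as $w$, which is consistent with Lemma~\ref{lema1d}(a) and with the loop structure on $B(X)$.
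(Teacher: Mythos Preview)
Your proof is correct. The sufficiency direction matches the paper's (where it just says ``it is clear''); the necessity direction takes a genuinely different route.

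For necessity, the paper first replaces each $b_i$ by $\pi(b_i)$, collapses equal neighbours to obtain a palindrome $a_1\cdots a_r\cdots a_1$ with $a_i\in W\setminus\{1\}$ pairwise distinct in consecutive positions and $a_r=\pi(b_k)$, and then runs an induction on $r$ that invokes the structural Lemma~\ref{0} (three-case decomposition of how $\pi$ can shorten such a product) to force $a_r=1$. Your argument never touches Lemma~\ref{0}: you peel the right tail letter by letter using Lemma~\ref{lema1d}(d), arriving at $\pi(ac)=a$ for $a=\pi(b_1\cdots b_{k-1})$, $c=\pi(b_k)$, and then finish with the three-clause definition of $\pi$ on $W$. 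This is more elementary and entirely self-contained within Lemma~\ref{lema1d}; the paper's route, by contrast, previews the machinery (Lemma~\ref{0}, Corollary~\ref{00}) that drives the harder Proposition~\ref{lema6}. Incidentally, your final case analysis is exactly the content of Lemma~\ref{lema1d}(f) applied with $w=1$, so you could compress that step to a single citation.
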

\begin{proof} We have $\pi(b_1...b_k...b_1) = \pi(\pi(b_1)...\pi(b_k)...\pi(b_1))$. If $\pi(b_k) = 1$, then it is clear that $\pi(b_1...b_k...b_1) = 1$.

Now suppose that $\pi(b_1...b_k...b_1) =1$. Omitting all $b_j,b_{j+1}$ such that $\pi(b_j) = \pi(b_{j+1})$, we get that $\pi(\pi(b_1)...\pi(b_k)...\pi(b_1)) = \pi(a_1a_2...a_ra_{r-1}...a_1)$, where $r\leq k$, $a_r = \pi(b_k)$, $a_i\not = a_{i+1}$ and $a_i\in W\setminus \{1\}$, for all $i<r$. 

We will prove that $a_r = 1$ by induction on $r$. Consider $r>1$ and define $a_{r+i} = a_{r-i}$, for all $i$. Let $l$ be the minimal such that $\pi(a_1a_2...a_l) = 1$. If $l<2r-1$, then $\pi(a_{l'}a_{l'-1}...a_1) = 1$, where $l' = 2r-1-l$, and so $\pi(a_1a_2...a_{l'}) = 1$ by Lemma \ref{lema1d}. Thus we only have to consider three cases:

(i) $l<r$. Then $\pi(a_{l+1}...a_r...a_{l+1}) = 1$, and hence $a_r = 1$ by the induction hypothesis.

(ii) $l=r$. Then $\pi(a_1a_2...a_r) = \pi(a_1a_2...a_{r-1}) = 1$, and we get $a_r = 1$.

(iii) $l = 2r-1$. By Lemma \ref{0}, if $a_r\not = 1$, then either $a_1 = a_1a_2...a_ra_{r-1}...a_2$ or $a_1 = va_sa_{s-1}...a_2$, for some $v\not = 1$ and $s>0$ such that $2(s-1) = 2r-3$, but both cases are impossible. Hence $a_r=1$.
\end{proof}

\vspace{4mm}

\begin{lem}
\label{lema1a} Let $n>1$ and $c,w_1,w_2,...,w_n \in W\setminus \{1\}$ be such that $cw_1 \in W$, $w_i \in Sub(c)\cup Sub(w_1)$ and $w_{i-1}\not = w_i$, for all $i$. Then $cw_1w_2...w_n \in W$.
\end{lem}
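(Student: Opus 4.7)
The plan is to induct on $n$. The base $n=1$ is exactly the hypothesis $cw_1\in W$, and the inductive hypothesis for $n-1$ applies with the same $c,w_1$ because all the stated conditions ($w_i\in W\setminus\{1\}$, $w_i\in \mathrm{Sub}(c)\cup\mathrm{Sub}(w_1)$, and the disequalities $w_{i-1}\neq w_i$) are inherited by restriction. So I may assume $y:=cw_1w_2\ldots w_{n-1}\in W$ and must show $yw_n\in W$.

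Write $z:=yw_n$. Since $y,w_n\in W\setminus\{1\}$, membership in $W$ for the subwords of $y$ and of $w_n$ is free, so $z\notin W$ can only fail at the top level; that is, $z$ itself must lie in $C(X)$. There are exactly two possibilities: either $z=aa$, which is equivalent to $y=w_n$, or $z=(ab)b$, which is equivalent to a top-level factorization $y=y'w_n$ for some $y'\in P$.

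I will rule out both. For $y=w_n$: using $w_n\in\mathrm{Sub}(c)\cup\mathrm{Sub}(w_1)$ one has $|w_n|\leq\max(|c|,|w_1|)$, while
\[
|y|=|c|+|w_1|+\cdots+|w_{n-1}|\geq |c|+|w_1|\geq |w_n|+\min(|c|,|w_1|)\geq |w_n|+1,
\]
so $|y|>|w_n|$, a contradiction. For $y=y'w_n$: by construction $y=(cw_1\ldots w_{n-2})w_{n-1}$ is a top-level factorization of $y$, and top-level factorizations of a non-associative word are unique, so we must have $w_n=w_{n-1}$, contradicting the hypothesis $w_{n-1}\neq w_n$. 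Either way we reach a contradiction, so $yw_n\in W$, closing the induction.

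I do not expect a real obstacle here; the proof is a direct unfolding of the definition of $W$ and $C(X)$. The only point worth stating carefully is the observation that when $y,w_n\in W$, the only obstruction to $yw_n\in W$ is at the outermost pair, which reduces the problem to the two cases above; then the length estimate handles the ``$yw_n=aa$'' case using the subword hypothesis, and the uniqueness of the top-level split handles the ``$yw_n=(ab)b$'' case using $w_{n-1}\neq w_n$.
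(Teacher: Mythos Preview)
Your proof is correct and follows essentially the same approach as the paper's: both argue by induction on the length of the product, reduce the question of whether $yw_n\in W$ to the two top-level obstructions $y=w_n$ and $y=y'w_n$, then eliminate the first via the length bound coming from $w_n\in\mathrm{Sub}(c)\cup\mathrm{Sub}(w_1)$ and the second via uniqueness of the top-level split together with $w_{n-1}\neq w_n$. Your write-up is somewhat more explicit about why only the outermost pair can fail and spells out the length inequality, but the underlying argument is identical.
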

\begin{proof}
For $1\leq m < n$, suppose that $cw_1w_2...w_m \in W$. Since $w_{m+1} \in Sub(c)\cup Sub(w_1)$, we have that $w_{m+1}\not = cw_1w_2...w_m$. Since $w_m \not = w_{m+1}$, there is no $\beta$ such that $cw_1w_2...w_m = \beta w_{m+1}$. Hence $cw_1w_2...w_mw_{m+1} \in W$.
\end{proof}

\vspace{4mm}

\begin{lem}
\label{lemma2} Let $k>1$ and $w = w_1w_2...w_kw_{k-1}...w_1 \in S$ be such that $w_i\in W\setminus \{1\}$,  $w_1w_2\in W$ and $w_i\not = w_{i+1}$ for all $i$. There are two possibilities:

(a) $\pi(w) = w$ or\\
(b) There exists $l$ such that $3\leq l \leq k$ and $w_l = w_1w_2...w_{l-1}$.

\end{lem}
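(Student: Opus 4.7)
The plan is to show that if conclusion (b) of the statement fails then $w\in W$, whence $\pi(w)=w$ and (a) holds. This splits naturally into two steps: first, control the ``forward half'' $w_1w_2\cdots w_k$ via Lemma \ref{0}; then, extend through the ``backward half'' via Lemma \ref{lema1a}.

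For the first step, I would apply Lemma \ref{0} to the sequence $w_1,\ldots,w_k$, whose terms lie in $W\setminus\{1\}$ and have distinct consecutive entries. If the prefix $w_1w_2\cdots w_k$ fails to lie in $W$, one of the three alternatives of Lemma \ref{0} must occur. The key observation is that cases (b) and (c) of Lemma \ref{0} both exhibit $v_1=v_1'v_jv_{j-1}\cdots v_2$, that is, $v_1$ is a left-associated product ending in $v_2$; this forces $v_1v_2=(v_1'v_j\cdots v_2)v_2\in C(X)$, contradicting the hypothesis $w_1w_2\in W$. Only alternative (a) of Lemma \ref{0} remains, and it produces precisely an index $l$ with $3\le l\le k$ such that $w_l=w_1w_2\cdots w_{l-1}$ --- this is conclusion (b) of the statement. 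Thus, under the assumption that (b) fails, $c:=w_1w_2\cdots w_k$ belongs to $W$.

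For the second step, I would apply Lemma \ref{lema1a} to the word $c\cdot w_{k-1}w_{k-2}\cdots w_1$. The subword hypothesis of Lemma \ref{lema1a} is automatic because each of $w_1,\ldots,w_{k-1}$ is visibly a subword of $c$, and consecutive distinctness is part of the statement. The only substantive check is $cw_{k-1}\in W$: one has $c\neq w_{k-1}$ by length, and $c=\alpha w_{k-1}$ would force the final factor $w_k$ of $c$ to equal $w_{k-1}$, contradicting $w_k\neq w_{k-1}$. For $k\ge 3$ (so that $n=k-1\ge 2$ as required by Lemma \ref{lema1a}), this yields $w\in W$. The boundary case $k=2$ is handled directly by the analogous $cw_1\in W$ check applied to $c=w_1w_2$: we have $w_1w_2\neq w_1$ by length, and $w_1w_2\neq\alpha w_1$ since that would force $w_2=w_1$.

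The main obstacle I anticipate is the cleanness of ruling out alternatives (b) and (c) of Lemma \ref{0}; once it is observed that both force $v_1$ to end in $v_2$ (so that $v_1v_2$ immediately lies in $C(X)$), the rest is bookkeeping. The extension step, thanks to Lemma \ref{lema1a}, then carries no real difficulty beyond the routine verifications above.
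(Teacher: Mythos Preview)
Your proof is correct. Both your argument and the paper's hinge on the observation that $w_1w_2\in W$ rules out alternatives (b) and (c) of Lemma~\ref{0} and forces alternative (a), but the two differ in where Lemma~\ref{0} is applied. You apply it only to the forward half $w_1\cdots w_k$ and then push through the backward half with Lemma~\ref{lema1a}; the paper instead applies Lemma~\ref{0} directly to the full palindromic sequence $w_1,\ldots,w_k,w_{k-1},\ldots,w_1$ of $2k-1$ terms and then argues that the index $l$ produced by alternative (a) must satisfy $l\le k$, since for $l>k$ one has $w_l=w_{2k-l}$, which would make $w_l$ a proper subword of $w_1w_2\cdots w_{l-1}=w_l$. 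The paper's route is a bit more economical---one invocation of Lemma~\ref{0} and no appeal to Lemma~\ref{lema1a}---while your two-step decomposition makes the passage through the backward half more explicit.
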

\begin{proof} If $k = 2$, then $w_1w_2w_1\in W$ since $w_1w_2\not = \alpha w_1$, for all $\alpha \in W$. Hence $\pi(w) = w$. 

Suppose that $k\geq 3$ and $\pi(w)\not = w$, and define $w_{k+i} = w_{k-i}$, for all $i$. By Lemma \ref{0} (a), there exists $l$ such that $2<l\leq 2k-1$ and $w_l = w_1w_2...w_{l-1}$. Since $w_l$ is not a proper subword of itself, we must have $l\leq k$.
\end{proof}


\begin{prop}
\label{lema6} Let $b = b_1b_2...b_kb_{k-1}...b_1 \in S$ be such that $b_1 \in W\setminus \{1\}$, $b_i\in R$ and $b_{i-1}\not = b_i$ for all $i>1$. Then  $\pi(b) = \lambda b_1$, where $\lambda = 1$ implies that $k = 1$ or $b_1\not\in R$.

Moreover, if $b_1\in R$, then $\pi(b) \in R$ if and only if $k = 1$.
\end{prop}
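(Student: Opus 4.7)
The plan is to induct on $k$ and apply Corollary \ref{00} to $v = b$ viewed as $v_1 v_2 \cdots v_{2k-1}$ with $v_i = b_i$ for $i \le k$ and $v_{k+j} = b_{k-j}$ for $1 \le j \le k-1$. The base case $k = 1$ is immediate, since $\pi(b) = b_1$ gives $\lambda = 1$, and $\pi(b) \in R$ iff $b_1 \in R$.

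For $k \ge 2$, Corollary \ref{00} supplies four cases. Case (a) ($\pi(v) = 1$) is excluded by Lemma \ref{prop1d}, as it would force $\pi(b_k) = b_k = 1$, contradicting $b_k \in R$. In cases (b) and (c), $\pi(v)$ is a left-associated product terminating in $v_{2k-1} = b_1$, so $\pi(v) = \lambda b_1$ by inspection; in case (c), $v_l' \ne 1$ yields $\lambda \ne 1$. In case (b), $\lambda = 1$ arises exactly when $l = 2k-1$, which requires $\pi(b_1 b_2 \cdots b_k b_{k-1} \cdots b_2) = 1$; by Lemma \ref{lema1d}(d) (with $u = b_1$) this rewrites as $b_1 = \pi(b_2 b_3 \cdots b_k b_{k-1} \cdots b_2)$. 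For $k = 2$ this gives $b_1 = b_2$, a contradiction; for $k \ge 3$, the induction hypothesis and its moreover clause applied to the depth-$(k-1)$ palindromic word starting with $b_2 \in R$ yield $\pi(b_2 \cdots b_k \cdots b_2) \notin R$, hence $b_1 \notin R$.

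Case (d), where $\pi(v) = v_l'$ with $v_l = v_l' v_{2k-1} v_{2k-2} \cdots v_{l+1}$ and $v_l' \ne 1$, must be shown impossible under the hypotheses. The key observation is that the innermost subword $v_l' \cdot v_{2k-1} = v_l' \cdot b_1$ of $v_l \in W$ necessarily lies in $W$, which forbids $v_l'$ from ending in $b_1$ at the top level, so the required form $\pi(v) = \lambda b_1$ cannot arise from case (d). To rule out case (d) intrinsically: $l = 1$ fails by a length comparison, as $|b_1| = |v_1'| + |b_1| + \sum_{j=2}^{2k-1} |v_j|$ would force a nonnegative residue to vanish; for $l \ge 2$, $v_l \in R$, so by Corollary \ref{l111} we have $v_{l+1} \in X$ and $b_l^t \in W$, and chasing the palindromic constraints combined with the initial reduction $\pi(v_1 \cdots v_{l-1}) = 1$ shows that the initial segment $v_{l+1} v_{l+2}$ of $b_l^t$'s left-associated reversal belongs to $C(X)$ (a phenomenon prototyped by the relation $b_3 = b_1 b_2$ giving $b_3 \cdot b_2 = (b_1 b_2) b_2 \in C(X)$), contradicting $b_l^t \in W$.

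For the moreover clause, $k = 1$ trivially gives $\pi(b) = b_1 \in R$. For $k \ge 2$ with $b_1 \in R$, the first part forces $\lambda \ne 1$, and we must show $\lambda b_1 \notin R$: by Corollary \ref{l111}, membership $\lambda b_1 \in R$ would demand $\lambda b_1 < (\lambda b_1)^t$ under the order on $W$, but the palindromic origin of $\lambda b_1$ arranges its canonical decomposition so that $(\lambda b_1)^t \le \lambda b_1$, the required contradiction. The main obstacle is the complete intrinsic elimination of case (d) for $l \ge 2$: tracing the palindromic constraints through the canonical decomposition of $b_l$ to locate a forbidden $(uv)v$-subword in $b_l^t$ demands careful bookkeeping of the non-associative binary tree structure; a secondary difficulty is the moreover clause's comparison of $\lambda b_1$ with its $t$-reversal using the order $<$ on $W$.
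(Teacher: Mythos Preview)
Your overall framework---induct on $k$ and apply Corollary~\ref{00} to the palindromic word---matches the paper's approach, and your treatment of cases (a), (b), (c) is essentially correct. However, there are two genuine gaps where you acknowledge difficulty but do not actually close it.

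\textbf{Case (d) for $l\ge 2$.} Your claimed mechanism---that ``the initial segment $v_{l+1}v_{l+2}$ of $b_l^t$'s left-associated reversal belongs to $C(X)$''---is not correct as stated. Since $v_l\in R$ forces the rightmost factor $v_{l+1}$ to lie in $X$ (by Corollary~\ref{l111}), and $v_{l+1}\ne v_{l+2}$, the product $v_{l+1}v_{l+2}$ is automatically in $W$, not in $C(X)$; your prototype $b_3=b_1b_2$ giving $(b_1b_2)b_2\in C(X)$ does not generalize. The paper handles this case by an entirely separate auxiliary lemma (Lemma~\ref{lema8}): it shows that under either of the shapes $b_n=b_n'b_1b_2\cdots b_{n-1}$ or $b_n=b_2\cdots b_{n-1}$, the vanishing condition $\pi(b_{2k-1}\cdots b_{n+1})=1$ that case (d) requires is impossible. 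The proof of Lemma~\ref{lema8} splits into $n<k$ (using the induction hypothesis on a shorter palindrome and a further application of Corollary~\ref{00}) and $n=k$ (using that $b_k^t\in W$ via \eqref{meq}), and each subcase needs real work. Your sketch does not supply any of this.

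\textbf{The ``moreover'' clause.} Your proposed argument---that $\lambda b_1\in R$ would force $\lambda b_1<(\lambda b_1)^t$ while ``the palindromic origin'' forces the reverse inequality---is not substantiated, and it is not clear how to make it work: the canonical decomposition of $\lambda b_1$ depends on which branch of Corollary~\ref{00} produced it, and there is no evident reason $(\lambda b_1)^t\le \lambda b_1$. The paper instead argues structurally: if $|b_1|>1$ then $\lambda b_1$ ends in a factor of length $>1$, violating \eqref{meq}; if $|b_1|=1$ then $b_1b_2\in W$, so Lemma~\ref{lemma2} forces some $b_m=b_1b_2\cdots b_{m-1}$, and a further lemma (Lemma~\ref{le7}, itself proved using the induction hypothesis) shows that in this situation $\pi(b)$ is explicitly of a form that lies outside $R$. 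Both auxiliary lemmas~\ref{le7} and~\ref{lema8} are where the real content of the proposition lives, and your sketch does not contain substitutes for them.
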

\begin{proof}

If $k \in \{1,2\}$ it is easy to see that the claim holds. Suppose that the claim holds for all $k'<k$, where $k\geq 3$. First we will prove the   following lemmas.\\

\begin{lem}
\label{le7} Suppose that $b_m = b_1b_2...b_{m-1}$, where $3\leq m \leq k$. Then $\pi(b) = \epsilon b_{m-1}...b_2b_1\not\in R$.
\end{lem}

\begin{proof} We have three cases:

(i) $m = k$. Thus $b_k = b_1b_2...b_{k-1}$, and hence $\pi(b) = \pi(b_{k-1}...b_2b_1)$. Since $b_{k-1}...b_2b_1 \in b_k^*$, it follows that $\pi(b) = b_{k-1}...b_2b_1$. Since $b_k \in R$ and $R\cap S=\emptyset$, we have $\pi(b)=b_{k-1}...b_2b_1 \not \in R$.\\

(ii) $m = k-1$. Thus $b_{k-1} = b_1b_2...b_{k-2}$ and $\pi(b) = \pi(b_kb_{k-1}...b_1)$. Since $|b_{k-1}|>1$ and $b_k\in R$, we have $b_kb_{k-1} \in W$ by \eqref{meq}. By Lemma \ref{lema1a}, $b_kb_{k-1}...b_1 \in W$, and then $\pi(b) = b_kb_{k-1}...b_1$. Since $b_1b_2...b_{k-1}\not \in W$, it follows that $b_1b_2...b_k \in \pi(b)^*\setminus W$, and hence $\pi(b)  \not \in R$ by \eqref{meq}.\\

(iii) $m<k-1$. Thus $\pi(b) = 
 \pi(b_{m+1}...b_kb_{k-1}...b_{m+1}...b_2b_1)$. By the induction hypothesis, $\pi(b_{m+1}...b_kb_{k-1}...b_{m+1}) = \lambda b_{m+1}$, where $\lambda \not = 1$ because $m+1 < k$ and $b_{m+1} \in R$. Then $\pi(b) = \pi(b_{m+1}...b_kb_{k-1}...b_{m+1}...b_2b_1) = \pi(\lambda b_{m+1}...b_2b_1)$. If  $b_m = \lambda b_{m+1}$, then $\pi(b) = \pi(b_{m-1}...b_2b_1)$. Since $b_{m-1}...b_2b_1 \in b_m^*$, it follows that $\pi(b) = b_{m-1}...b_2b_1$. Furthermore, since $b_m\in R$ and $b_{m-1}...b_2b_1 \not = b_m$, we have $\pi(b) = b_{m-1}...b_2b_1 \not \in R$.

When $\lambda b_{m+1}b_m \in W$ we have that  $\lambda b_{m+1}b_m ...b_2b_1 \in W$ by Lemma \ref{lema1a}. Then $\pi(b) = \lambda b_{m+1}...b_2b_1$. Since $b_1b_2...b_m\not \in W$, we have $b_1b_2...b_{m+1}\lambda \in \pi(b)^*\setminus W$, and hence $\pi(b)\not \in R$ by \eqref{meq}.

Therefore, we proved Lemma \ref{le7}.
\end{proof}
\vspace{4mm}

Define $b_{k+i} = b_{k-i}$, for all $i$. Note that $b = b_{2k-1}b_{2k-2}...b_1 = b_1b_2...b_{2k-1}$.\\

\begin{lem}
\label{lema8} 
 Suppose that for $n\in \{2,3,...,k\}$, $b_1\in R$ and we have one of the following situations:

(a$1$) $b_n = b_n' b_1b_2...b_{n-1}$, where $b_n'\not = 1$, or

(a$2$) $b_n = b_2...b_{n-1}$.

Then  $\pi(b_{2k-1}b_{2k-2}...b_{n+1})\not = 1$.
\end{lem}

\begin{proof} 
  Suppose by contradiction that $\pi(b_{2k-1}b_{2k-2}...b_{n+1}) = 1$. We have two cases:\\

(i) $n< k$. Let $v = b_1b_2...b_n$. Then $\pi(vb_{n+1}...b_k...b_{n+1})  = 1$, and so $\pi(v) = \pi(b_{n+1}...b_k...b_{n+1})$ by Lemma \ref{lema1d}. By the induction hypothesis, we get that $\pi(v) = \lambda b_{n+1}$, where $\lambda = 1$ if and only if $n + 1 = k$. Applying Corollary \ref{00} to the word  $v = b_1b_2...b_n$, we have two cases $\pi(v)=\alpha b_n$ (in the cases (b) and (c)) or
$\pi(v)=b_l^{\prime}$, 
$\pi(b_1...b_{l-1})=1,$
 $b_l=b_l^{\prime}b_nb_{n-1}...b_{l+1}$ (case (d)). We note that the case (a) is impossible since $\pi(v)\not=1.$

Let $\pi(v) = \alpha b_n$. If $\alpha = 1$, then $\lambda b_{n+1} \in R$. Since $b_1\in R$, we get that $n+1 = k$ by the induction hypothesis, and hence $b_n = \pi(v) = b_{n+1}$, which is a contradiction. Suppose that $\alpha \not = 1$. Since $b_n \not = b_{n+1}$, it follows that $\lambda = 1$, and then $|b_n| = 1$ by \eqref{meq}, which contradicts (a$1$) and (a$2$). 

 Let $\pi(v)=b_l^{\prime}$ and $ l=1.$ Then $b_1 = \lambda b_{n+1} b_n b_{n-1}...b_2$. In (a1) this does not occur since $b_1 \in Sub(b_n)$. Now consider the case (a2). Since $\pi(b_2b_3...b_n) = 1$ in this case, we get that $b_1^t \in b_1^*\setminus W$, and then $b_1\not \in R$ by \eqref{meq}, a contradiction.

If $\pi(v)=b_l^{\prime}$ and $1<l<n$, $\pi(b_1b_2...b_{l-1}) = 1$ and $b_l = b_l^{\prime}b_n b_{n-1}...b_{l+1}$. Then we have a contradiction since $b_l \in Sub(b_n)$ in both cases (a1) and (a2).\\

(ii) $n = k$. By assumption, we have that $\pi(b_1b_2...b_{k-1})  = 1$, and then $\pi(b_{k-1}...b_2b_1)  = 1$ by Lemma \ref{lema1d}. First, consider the case (a1). Since $b_k = b_k' b_1b_2...b_{k-1}$ and $\pi(b_{k-1}...b_2b_1)  = 1$, it follows that $b_k^t \in b_k^*\setminus W$, and then $b_k\not \in R$ by \eqref{meq}, a contradiction.

Now consider the case (a2). Since $b_k = b_2...b_{k-1}$, then $|b_{k-1}| = 1$ by \eqref{meq}, and so $b_{k-1}b_{k-2}\in W$. Since  $\pi(b_{k-1}...b_2b_1)  = 1$, it follows that there exists $l$ such that $b_l = b_{k-1}b_{k-2}...b_{l+1}$ by Lemma \ref{0}. If $l>1$, then $\pi(b_{k-1}b_{k-2}...b_{l+1}b_l) = 1$, and so $b_k^t \in b_k^*\setminus W$, which is a contradiction. If $l = 1$, then $b_1 = b_k^t$, which is a contradiction since $b_1,b_k \in R$.

Therefore, Lemma \ref{lema8} is proved.
\end{proof}

\vspace{4mm}

Now we can finish the proof of Proposition \ref{lema6}. First, let us prove that $\pi(b) = \lambda b_1$, for some $\lambda \in W$, where $\lambda \not = 1$ if $b_1\in R$. By Lemma \ref{prop1d}, we have that $\pi(b)\not =1$, and then there are three possibilities according to Corollary \ref{00}:

(i) $\pi(b) = b_n'b_m...b_1$, where $b_n = b_n'b_{m+1}b_{m+2}...b_{n-1}$, $1\leq m<n \leq 2k-1$ and $b_n'\not =1$ if $m=1$. Thus we have the desired result.\\

(ii) $\pi(b) = b_n'$, where $b_n = b_n'b_1b_2...b_{n-1}$, $1<n<2k-1$, $b_n'\not = 1$ and $\pi(b_{2k-1}b_{2k-2}...b_{n+1}) = 1$. Since $b_n$ can not be a proper subword of itself, we get $n\leq k$. Furthermore, we get that $b_1\in R$ by \eqref{meq}. Then we have a contradiction with Lemma \ref{lema8}.\\

(iii) $\pi(b) = b_nb_{n-1}...b_1$, where $\pi(b_{2k-1}b_{2k-2}...b_{m+1}) = 1$, $b_m = b_{n+1}b_{n+2}...b_{m-1}$ and $1\leq n<m\leq 2k-1$. If $n>1$ or $n=1$ and $b_1 \not \in R$, then we have the desired result. Suppose that $\pi(b) = b_1\in R$. We have two cases:\\

(iii.1)  $|b_2|>1$. By \eqref{meq}, $b_1$ can not be of the form $b_1^{\prime} b_2$, and then $b_1b_2\in W$. By Lemmas \ref{lemma2} and \ref{le7}, we get that $\pi(b) \not \in R$, which is a contradiction.
\\

(iii.2)  $|b_2|=1.$ Note that $b_m =  b_2b_3...b_{m-1}$. Since $b_m$ can not be a proper subword of itself and $b_2b_3...b_k...b_2\not \in R$, it follows that $m\leq k$. Then $\pi(b_{2k-1}b_{2k-2}...b_{m+1}) \not = 1$ by Lemma \ref{lema8}, which is a contradiction.\\

Now we only have to prove that $\pi(b) \not \in R$ when $b_1\in R$. Consider that $b_1\in R$ and $\pi(b) = \lambda b_1$, where $\lambda \not = 1$. If $|b_1| > 1$, then $\pi(b)\not \in R$ by \eqref{meq}. If $|b_1| = 1$, then $b_1b_2\in W$, and as in (iii.1) we get that $\pi(b) \not \in R$.
\end{proof}

\begin{cor}
\label{cor0}
Let $b = b_1b_2...b_kb_{k-1}...b_1 \in S$ and $g = g_1g_2...g_ng_{n-1}...g_1 \in S$ be such that $b_i,g_j \in R$, $b_{i-1}\not = b_i$ and $g_{j-1}\not = g_j$, for all $i$ and $j$. If $\pi(b) = \pi(g)$, then $b_1 = g_1$.
\end{cor}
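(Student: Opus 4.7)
The plan is to apply Proposition \ref{lema6} to each of $b$ and $g$, then read off the first letter as the right factor of the outermost binary decomposition. Since $b_1, g_1 \in R \subset W\setminus\{1\}$, Proposition \ref{lema6} yields $\pi(b) = \lambda b_1$ and $\pi(g) = \mu g_1$ for some $\lambda, \mu \in W$, together with the ``moreover'' clause that $\pi(b) \in R$ iff $k = 1$ and $\pi(g) \in R$ iff $n = 1$.

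First I would dispose of the degenerate case $k = 1$: then $\pi(b) = b_1 \in R$, so $\pi(g) = \pi(b) \in R$ forces $n = 1$, and hence $g_1 = \pi(g) = b_1$. By symmetry, $n = 1$ forces $k = 1$. In the remaining case $k, n \geq 2$, Proposition \ref{lema6} additionally guarantees $\lambda, \mu \neq 1$, so $\pi(b) = \lambda b_1$ and $\pi(g) = \mu g_1$ are genuine binary products of two nonempty non-associative words.

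The final step is to note that the concrete forms produced in cases (i) and (iii) of the proof of Proposition \ref{lema6} are $b_n' b_m \ldots b_1$ and $b_n b_{n-1} \ldots b_1$, which under the convention $u_1 u_2 \ldots u_r = (\ldots((u_1 u_2) u_3)\ldots) u_r$ have $b_1$ as the right factor of the outermost product; analogously for $g_1$. By uniqueness of the decomposition $u = u_1 u_2$ of a non-associative word (with $u_1, u_2 \neq 1$), the equality $\lambda b_1 = \mu g_1$ forces $b_1 = g_1$ (and incidentally $\lambda = \mu$). The only subtle point, and the main obstacle, is confirming that $\pi(b) = \lambda b_1$ in Proposition \ref{lema6} really refers to the outermost binary decomposition of the non-associative word rather than to an abstract algebraic factorization; this is settled by inspection of the explicit forms listed above, after which the corollary is immediate.
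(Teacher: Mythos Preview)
Your argument is correct and matches the paper's intended proof: the corollary is stated without proof immediately after Proposition~\ref{lema6}, and the deduction you give---apply the proposition to each symmetric word, handle $k=1$ or $n=1$ via the ``moreover'' clause, then read off the right factor of the outermost split---is exactly what is implied. Your caution about whether $\lambda b_1$ denotes the outermost binary decomposition is unnecessary, since in the paper's conventions juxtaposition of two nonempty elements of $P$ (hence of $W$) is by definition the non-associative word with that outermost split; but the extra verification does no harm.
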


As a consequence of Proposition \ref{lema6} and  Lemma \ref{le7} we have the following result:

\begin{cor}
\label{cor1}
Let $b = vb_1b_2...b_kb_{k-1}...b_1v \in S$ be such that $v \in W\setminus \{1\}$, $vb_1 \in W$, $b_i\in R$ and $b_{i-1}\not = b_i$ for all $i>1$. There are two possibilities:

(a) $\pi(b) = b$,\\
(b) There exists $m$ such that $2\leq m \leq k$ and $\pi(b) = \epsilon b_{m-1}...b_1v$, where $\epsilon \in W$.

\end{cor}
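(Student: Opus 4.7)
The plan is to reduce the statement to Lemma~\ref{le7} via a relabeling that turns $b$ into a fully symmetric word with $v$ as its outermost letter. Set $c_1:=v$ and $c_{i+1}:=b_i$ for $1\le i\le k$; then $b=c_1c_2\cdots c_{k+1}c_k\cdots c_1$ is a symmetric word with $k+1$ distinct nested levels, and the hypotheses of Proposition~\ref{lema6} (with $k+1$ in place of $k$ and the $c_i$ in place of the $b_i$) are satisfied: $c_1=v\in W\setminus\{1\}$, $c_i\in R$ for $i\ge 2$, and $c_{i-1}\ne c_i$ for every $i\ge 2$ (the case $i=2$ uses $vb_1\in W$, which forces $v\ne b_1$).

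If $b\in W$, then $\pi(b)=b$, which is case (a). Otherwise $|\pi(b)|<|b|$, and I apply Lemma~\ref{0} to the sequence $c_1,c_2,\ldots,c_{k+1},c_k,\ldots,c_1$. Since $c_1c_2=vb_1\in W$, only case (a) of Lemma~\ref{0} can occur, producing the least index $M\ge 3$ with $c_1c_2\cdots c_{M-1}\in W$ and $c_M=c_1c_2\cdots c_{M-1}$ as non-associative words.

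Next I show that $M\le k+1$. Suppose instead that $M=k+1+q$ for some $1\le q\le k$. By the symmetry of the sequence, the element at position $M$ equals $c_{k+1-q}$, and this term already occurs at position $k+1-q$ in the product $c_1c_2\cdots c_{M-1}$. Taking lengths in $c_M=c_1c_2\cdots c_{M-1}$ and using $|c_p|\ge 1$ for every $p$ gives
\[
|c_{k+1-q}|=\sum_{p=1}^{k+q}|c_p|\ \ge\ |c_{k+1-q}|+(k+q-1)\ >\ |c_{k+1-q}|,
\]
a contradiction. Hence $3\le M\le k+1$, and setting $m:=M-1\in\{2,\ldots,k\}$ rewrites the identity $c_M=c_1c_2\cdots c_{M-1}$ as $b_m=vb_1b_2\cdots b_{m-1}$.

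Finally, Lemma~\ref{le7} applied to the relabeled word $c_1c_2\cdots c_{k+1}c_k\cdots c_1$ with ``$k$'' replaced by $k+1$ and the ``$m$'' of that lemma replaced by $M$ gives $\pi(b)=\epsilon\,c_{M-1}c_{M-2}\cdots c_1$ for some $\epsilon\in W$. Translating back through $c_1=v$ and $c_{i+1}=b_i$ yields $\pi(b)=\epsilon\,b_{m-1}b_{m-2}\cdots b_1v$, which is case (b). The only nontrivial step is the length count ruling out $M>k+1$; the remainder is routine bookkeeping to verify that the relabeling preserves the hypotheses of Proposition~\ref{lema6} and Lemma~\ref{le7}.
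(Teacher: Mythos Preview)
Your proof is correct and follows essentially the same route the paper has in mind: relabel with $c_1=v$, $c_{i+1}=b_i$, so that the hypotheses of Proposition~\ref{lema6} are met, and then invoke Lemma~\ref{le7} once an index $M$ with $c_M=c_1c_2\cdots c_{M-1}$ has been located. The only minor redundancy is that your Lemma~\ref{0} step together with the length count is exactly the content of Lemma~\ref{lemma2} (applied to the $c_i$'s with ``$k$'' equal to $k+1$), so you could simply cite that lemma instead of re-deriving it.
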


\begin{lem}
\label{lema7b} Let $b = vb_1v \in S$ and $g = vg_1g_2...g_ng_{n-1}...g_1v \in S$ be such that $v \in W\setminus\{1\}$, $v\not = b_1$, $v\not = g_1$, $b_1,g_j \in R$, and $g_{j-1}\not = g_j$, for all $j$. If $\pi(b) = \pi(g)$, then $b = g$.
\end{lem}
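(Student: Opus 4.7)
The plan is to apply Lemma~\ref{lema1d}(c) with the common right factor $v$ to the equation $\pi(b)=\pi(g)$, which converts it to
\[
\pi(vb_1)=\pi(vg_1g_2\cdots g_ng_{n-1}\cdots g_1),
\]
and then match the two sides through a case analysis.

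For the left-hand side there are two possibilities: either $vb_1\in W$ (giving $\pi(vb_1)=vb_1$), or $v=\alpha b_1$ for some $\alpha\in W\setminus\{1\}$, a proper left subtree of $v$ (giving $\pi(vb_1)=\alpha$); call these Case B and Case A respectively. For the right-hand side, the plan is to compute $\pi(vg_1\cdots g_1)$ iteratively via $\phi_k=\pi(\phi_{k-1}w_k)$ with $\phi_0=v$ and $w_k$ running through the palindromic sequence $g_1,g_2,\ldots,g_n,g_{n-1},\ldots,g_1$. Since consecutive palindrome elements differ and the right subtree of each accumulated $\phi_{k-1}$ is the previously appended element, no $(uw)w$-type cancellation can occur; the only possible cancellation is of $uu$-type, namely $\phi_{k-1}=w_k$ as binary trees, which requires $w_k=g_j\in R$ to coincide with the left-associated product $vg_1\cdots g_{k-1}$. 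Absent cancellations, $\phi_{2n-1}=vg_1\cdots g_1\in W$, and matching this with $vb_1$ (Case B) by binary tree comparison forces $n=1$ and $g_1=b_1$, so $g=b$; matching with $\alpha$ (Case A) fails by the length estimate $|\alpha|<|v|\le|\phi_{2n-1}|$ unless $n=1$, in which case the corresponding decomposition $v=\gamma g_1$ ensuring $\pi(vg_1)=\gamma$ together with $v=\alpha b_1$ forces $\alpha=\gamma$ and $b_1=g_1$.

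The main obstacle is handling the cancellation scenarios, in particular the possibility that $g_j=vg_1\cdots g_{k-1}$ as a binary tree for some $k$ and $j$, in which case $\phi_k=1$ and the subsequent iteration takes a different shape. For the base case $n=1$, a direct pairing of Case A/B on the left with the analogous two subcases on the right yields $b_1=g_1$ and hence $b=g$. For $n\ge 2$, the structural constraints imposed by $g_j\in R$ (notably $R\cap S(X)=\emptyset$ and the condition $y_0^t\neq y_0$ in Definition~\ref{d1}) limit how much cancellation can propagate through the iteration; a case-by-case check then shows that the left factor $\mu$ in $\pi(g)=\mu v$ cannot coincide with $vb_1$ (Case B) or with any proper subword $\alpha$ of $v$ (Case A) for a valid choice of $b_1\in R$ with $b_1\neq v$, contradicting the hypothesis $\pi(b)=\pi(g)$. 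This reduces us to the base case $n=1$, which we have already settled, and concludes that $b=g$.
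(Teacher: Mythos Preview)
Your base case $n=1$ and the opening reduction $\pi(vb_1)=\pi(vg_1\cdots g_n\cdots g_1)$ via Lemma~\ref{lema1d}(c) are correct and match the paper. The iterative framework $\phi_k=\pi(\phi_{k-1}w_k)$ is a reasonable way to organise the computation, and your observation that, \emph{as long as no cancellation has yet occurred}, the right subtree of $\phi_{k-1}$ is $w_{k-1}\neq w_k$ so only $uu$-type cancellation is possible, is also correct.

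The genuine gap is in the $n\ge 2$ case. You correctly identify the obstacle---a collapse $\phi_k=1$ when some $g_j=vg_1\cdots g_{k-1}$---but then dispose of it with ``a case-by-case check then shows that the left factor $\mu$ in $\pi(g)=\mu v$ cannot coincide\ldots''. That sentence is not a proof; it is the entire content of the lemma. Once a collapse occurs, the iteration restarts from $\phi_k=1$, and the subsequent $\phi$'s no longer carry $v$ as an initial segment; controlling the eventual $\phi_{2n-1}$ in terms of $v$ and $b_1$ is exactly what is hard. The paper's proof shows this step is far from a routine check: it proceeds by induction on $n$, manufactures an auxiliary element $\alpha$ with $\alpha b_1\alpha\in W$ and $\pi(\alpha b_1 g_1\cdots g_n\cdots g_1 b_1\alpha)=\alpha b_1\alpha$ (splitting on whether $v=\alpha b_1$ or $vb_1v\in W$), and then treats separately the cases $b_1=g_1$ (reducing to the inductive hypothesis and invoking $R\cap S(X)=\emptyset$ and Corollary~\ref{cor0}) and $b_1\neq g_1$ (further splitting on where the first cancellation lands, with three subcases $m=n$, $m=n-1$, $m<n-1$, each using Proposition~\ref{lema6} and Lemma~\ref{lema1a}).

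Concretely, your length estimate ``$|\alpha|<|v|\le|\phi_{2n-1}|$'' in Case~A is only valid when no cancellation occurs; after a collapse $|\phi_{2n-1}|$ can be strictly smaller than $|v|$, so this inequality does not by itself rule out a match. To complete your approach you would need either to reproduce an induction on $n$ in the spirit of the paper, or to give an explicit analysis of the post-collapse iteration that pins down the form of $\phi_{2n-1}$.
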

\begin{proof}
If $n = 1$, then $\pi(vb_1) = \pi(vg_1)$. By Lemma \ref{lema1d} (f), $b_1 = g_1$, and hence $b = g$. Now suppose that $n>1$ and the claim holds for every $n'<n$. We will prove this result in two steps:

(i) First we will prove that there exists $\alpha \in W\setminus \{1\}$ such that $\alpha b_1 \alpha\in W$, \\$\pi(\alpha b_1 g_1g_2...g_ng_{n-1}...g_1 b_1\alpha) = \alpha b_1\alpha$ and either $\alpha b_1\not = g_1$ or $\alpha=v$. We have two cases:

(i.1) $v = \alpha b_1$, with $\alpha \not = 1$. Then $\pi(vb_1v) = \pi(\alpha v) = \pi(vg_1g_2...g_ng_{n-1}...g_1v)$, and hence by Lemma \ref{lema1d} (c):

\begin{equation}
\label{eqlema7b}
\pi(\alpha) = \pi(vg_1g_2...g_ng_{n-1}...g_1).
\end{equation}

Using Lemma \ref{lema1d} (c) and $v = \alpha b_1$ in \eqref{eqlema7b}, we get $\pi(\alpha b_1\alpha) = \pi(\alpha b_1 g_1...g_n...g_1 b_1\alpha)$.\\ 

(i.2) $vb_1 v \in W$. By $\pi(vg_1g_2...g_ng_{n-1}...g_1v) = \pi(vb_1v) = vb_1v$ and Lemma \ref{lema1d} (e), we get $\pi(vb_1 g_1g_2...g_ng_{n-1}...g_1  v) = 1$. Thus 

\begin{center}
$\pi(vb_1 g_1...g_n...g_1 b_1v) = \pi(vb_1 g_1...g_n...g_1vvb_1v) = \pi(\pi(vb_1 g_1...g_n...g_1v)\pi(vb_1v)) = vb_1v$
\end{center}

and we put $\alpha = v$.\\


(ii) Now consider $\alpha \in W\setminus \{1\}$ as in (i). If $b_1 = g_1$, then 

\begin{center}
$ \pi(\alpha b_1g_1g_2...g_n...g_1b_1\alpha) = \pi(\alpha g_2...g_n...g_2\alpha) =  \alpha b_1\alpha$.
\end{center}

If $\alpha \not = g_2$, then by induction $b_1 = g_2...g_n...g_2$. Since $b_1\in R$ and $R\cap S = \emptyset$, then $n=2$ and $b_1 = g_2$, which is a contradiction with $b_1=g_1\not = g_2$. In the case $\alpha = g_2$ and $n>2$, we get $\pi(g_3...g_n...g_3) = \alpha b_1\alpha = g_2b_1g_2$. By Corollary \ref{cor0}, we have $g_3 = g_2$, a contradiction. Finally, if $\alpha = g_2$ and $n=2$, we have $\pi(g_2) = g_2b_1g_2$. Since $g_2\in R$, hence $b_1 = g_2$, which is a contradiction with $b_1=g_1$.

Suppose $b_1 \not = g_1$. By the choice of $\alpha$, either $\alpha b_1g_1 \in W$ or $g_1 = \alpha b_1$. We have two cases: 

(ii.1) $g_1 = \alpha b_1$. Then $\alpha b_1\alpha =  \pi(\alpha b_1 g_1...g_ng_{n-1}...g_1b_1\alpha) = \pi(g_2...g_ng_{n-1}...g_1b_1\alpha)$. Hence by Lemma \ref{lema1d} (c) we get $\pi(g_2...g_ng_{n-1}...g_1) = \alpha$ and $\alpha g_1 = \alpha b_1\alpha\in W$. Using the same lemma again, we get $\pi(g_2...g_ng_{n-1}...g_2) = \alpha g_1$. If $n=2$, then $g_2=\alpha g_1 \in R$, and hence $g_1\in X$, which is a contradiction with $g_1 = \alpha b_1$. Then $n>2$. By Proposition \ref{lema6}, there exists $\lambda\not =1$ such that $\alpha g_1 = \lambda g_2$, and then $g_1 = g_2$, a contradiction.\\

(ii.2) $\alpha b_1g_1 \in W$. Note that $\alpha b_1\alpha \not = \alpha b_1 g_1...g_n...g_1 b_1\alpha$. Then there exists $m$ such that $1<m\leq n$ and $g_m = \alpha b_1 g_1...g_{m-1}$ by Lemma \ref{lemma2}. We have three more cases:

(ii.2.1) $m = n$. Then $\alpha b_1\alpha = \pi(g_{n-1}...g_1 b_1\alpha)$. Since $g_{n-1}...g_1 b_1\alpha \in g_n^*$ and $g_n\in R$, it follows  that $\alpha b_1\alpha = g_{n-1}...g_1 b_1\alpha $, which is a contradiction because  $g_n^* \cap S(X) \not = \emptyset$.

(ii.2.2) $m = n-1$. Then $\alpha b_1\alpha = \pi(\alpha b_1 g_1...g_n...g_1 b_1\alpha) =  \pi(g_ng_{n-1}...g_1 b_1\alpha)$. Since $|g_{n-1}|>1$ and $g_n\in R$, we have $g_ng_{n-1} \in W$. By Lemma \ref{lema1a}, $g_ng_{n-1}...g_1 b_1\alpha \in W$. Thus $\alpha b_1\alpha = g_ng_{n-1}...g_1 b_1\alpha$, and hence $\alpha = g_ng_{n-1}...g_1$, which is a contradiction because $\alpha \in Sub(g_{n-1})$.

(ii.2.3) $m < n-1$. Then $\alpha b_1\alpha =  \pi(g_{m+1}...g_n...g_1 b_1\alpha)$. By Proposition \ref{lema6}, there exists $\lambda \not = 1$ such that $\alpha b_1\alpha = \pi(\lambda g_{m+1}g_m...g_1 b_1\alpha)$ and $\lambda g_{m+1} \in W$. If $g_m = \lambda g_{m+1}$, similarly to (ii.2.1) we get a contradiction. If $\lambda g_{m+1}g_m\in W$, similarly to (ii.2.2) we get a contradiction.
\end{proof}

\begin{lem}
\label{lema7a} Let $b = vb_1b_2...b_kb_{k-1}...b_1v \in S$ and $g = vg_1g_2...g_ng_{n-1}...g_1v \in S$ be such that $v \in W\setminus\{1\}$, $v\not = b_1$, $v\not = g_1$, $b_i,g_j \in R$, $b_{i-1}\not = b_i$ and $g_{j-1}\not = g_j$, for all $i$ and $j$. If $\pi(b) = \pi(g)$, then $b = g$.
\end{lem}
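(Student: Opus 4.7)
The approach is induction on $\min(k,n)$. By the symmetric form of the hypotheses in $b$ and $g$, we may assume $k\le n$. The base case $k=1$ is precisely Lemma~\ref{lema7b} (with the roles of $b$ and $g$ exchanged). For the inductive step we assume $k\ge 2$ and that the lemma holds whenever $\min(k',n')<\min(k,n)$.

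The plan has two main parts: first show that $b_1=g_1$, and then peel off $v$ and $b_1=g_1$ to reduce the equation to a smaller instance of the lemma. To show $b_1=g_1$, apply Corollary~\ref{cor1} to both $\pi(b)$ and $\pi(g)$, yielding four combinations. If $\pi(b)=b$ and $\pi(g)=g$ we are done immediately. Otherwise at least one of $\pi(b),\pi(g)$ has the reduced form $\epsilon\, b_{m-1}\cdots b_1 v$ or $\epsilon'\, g_{n'-1}\cdots g_1 v$. Since the two sides of $\pi(b)=\pi(g)$ produce the same element of $W$, comparing their canonical decompositions (Definition~\ref{g1}) from right to left matches the outer $v$'s, and then matches the next factors, which on the $b$-side must equal $b_1$ and on the $g$-side must equal $g_1$; uniqueness of the canonical decomposition forces $b_1=g_1$. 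The mixed subcases (exactly one side reduced) require a more delicate analysis in the spirit of Proposition~\ref{lema6} and Corollary~\ref{cor0}, using $R\cap S(X)=\emptyset$ to exclude spurious identifications.

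Granted $b_1=g_1$, we peel. From $\pi(b)=\pi(g)$, the equivalence $\pi(Xv)=\pi(Yv)\iff\pi(X)=\pi(Y)$ (Lemma~\ref{lema1d}(a) and (f)) gives
\[
\pi(vb_1b_2\cdots b_k b_{k-1}\cdots b_1)=\pi(vg_1g_2\cdots g_n g_{n-1}\cdots g_1).
\]
Applying Lemma~\ref{lema1d}(f) with common rightmost factor $b_1=g_1$ yields
\[
\pi(vb_1b_2\cdots b_k b_{k-1}\cdots b_2)=\pi(vg_1g_2\cdots g_n g_{n-1}\cdots g_2).
\]
Set $v':=vb_1=vg_1$. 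When $v'\in W\setminus\{1\}$ and $v'\neq b_2,g_2$, the word $vb_1b_j$ coincides (as a parenthesized expression) with $v'b_j$, so the equation becomes $\pi(v'b_2\cdots b_k\cdots b_2)=\pi(v'g_2\cdots g_n\cdots g_2)$; reappending $v'$ on the right via the same equivalence produces the sandwich equation
\[
\pi(v'b_2\cdots b_k b_{k-1}\cdots b_2\, v')=\pi(v'g_2\cdots g_n g_{n-1}\cdots g_2\, v').
\]
This is a valid instance of the lemma with parameters $(k-1,n-1)$, so the induction hypothesis gives $k-1=n-1$ and $b_{i+1}=g_{i+1}$ for every $i\ge 1$. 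Together with $b_1=g_1$ this gives $b=g$.

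\emph{Main obstacle.} The principal difficulty is twofold. First, extracting $b_1=g_1$ from $\pi(b)=\pi(g)$ via Corollary~\ref{cor1} is case-heavy; in the mixed situations (one side reduced, the other not) one must rule out alternative matches of the canonical decompositions, and this requires exploiting $R\cap S(X)=\emptyset$ together with the finer structural information on $\epsilon$ provided by Proposition~\ref{lema6}. Second, the reduction relies on $v'=vb_1\in W\setminus\{1\}$ with $v'\neq b_2$ and $v'\neq g_2$; when this fails (e.g.\ $v=\alpha b_1$ so $vb_1\notin W$, or $b_2=vb_1$), a separate argument analogous to step~(i) of Lemma~\ref{lema7b} is needed to handle the degeneracies before invoking the induction hypothesis.
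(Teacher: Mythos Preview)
Your outline has the same two-step skeleton as the paper's proof (show $b_1=g_1$, then peel to an instance with smaller $k$), and your peeling step is essentially the paper's Affirmation~1. The gap is in the first step.

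Your method for extracting $b_1=g_1$ is to invoke Corollary~\ref{cor1} on both $b$ and $g$ and then compare right-to-left factorizations. But Corollary~\ref{cor1} carries the hypothesis $vb_1\in W$, which Lemma~\ref{lema7a} does \emph{not} assume. If both $vb_1\notin W$ and $vg_1\notin W$, then $v=v'b_1=v''g_1$ forces $b_1=g_1$ immediately; and if both $vb_1,vg_1\in W$, your factorization comparison does go through. The problem is the asymmetric case: $vb_1\in W$ but $vg_1\notin W$ (or vice versa). Here Corollary~\ref{cor1} applies only to one side, and your ``compare the next factor after $v$'' argument breaks down, because on the $g$-side the factor immediately to the left of $v$ in $\pi(g)=\beta v$ need not be $g_1$ at all. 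The paper handles exactly this situation via Proposition~\ref{lema6}: it writes $\pi(b)=\alpha v$, $\pi(g)=\beta v$ and splits into ten cases according to whether $\alpha,\beta$ are $1$, equal to $b_1$ (resp.\ $g_1$), of the form $\alpha_1 b_1$ with $\alpha_1\neq1$, or satisfy $\alpha b_1\in W$. In several of the asymmetric cases (e.g.\ (v), (vi)) the conclusion is not ``$b_1=g_1$'' but a contradiction, obtained by feeding the relation back through Proposition~\ref{lema6} and using that $R\cap S(X)=\emptyset$ and the structure of $R$ via \eqref{meq}. Your proposal gestures at this (``the mixed subcases require a more delicate analysis'') but supplies no argument; this is the substantive content you are missing.

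A secondary gap: in the peeling step you set $v'=vb_1$ and assume $v'\in W$. The paper's Affirmation~1 instead works with $\pi(vb_1)$ and treats separately the degenerate case $b_2=\pi(vb_1)$, reducing $k$ by~$2$ (or~$3$) with an extra appeal to Proposition~\ref{lema6} and Corollary~\ref{cor0}. You flag this as an obstacle but do not resolve it; the resolution is short but not automatic.
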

\begin{proof} We will prove this result by induction on $k$. We can consider $n\geq k>1$. First we will prove the affirmation 1:\\

\textbf{Affirmation 1.} If $b_1 = g_1$, then $b=g$.

Proof of affirmation 1:
We have that $\pi(vb_1b_2...b_kb_{k-1}...b_2)= \pi(vb_1g_2...g_ng_{n-1}...g_2)$. Then $\pi(\pi(vb_1)b_2...b_kb_{k-1}...b_2\pi(vb_1))= \pi(\pi(vb_1)g_2...g_ng_{n-1}...g_2\pi(vb_1))$. If  $b_2 \not = \pi(vb_1)$, then $g_2 \not = \pi(vb_1)$ by Proposition \ref{lema6} and Corollary \ref{cor0}, and hence the result follows by the induction hypothesis.

Suppose that $b_2  = \pi(vb_1)$. If $k=2$, then $\pi(b_2g_2...g_ng_{n-1}...g_2b_2) = b_2$. By Proposition \ref{lema6}, we must have $b_2 = g_2$ and $n =2$, and then $b = g$. Using a similar argument, we get that $k=3$ implies $b = g$. Now consider $k>3$. Then $\pi(b_3...b_kb_{k-1}...b_3)= \pi(b_2g_2g_3...g_ng_{n-1}...g_3g_2b_2)$. By $b_2\not = b_3$ and Corollary \ref{cor0}, we must have $b_2 = g_2$ and $b_3 = g_3$, and hence the desired result follows by the induction hypothesis.

Therefore, affirmation 1 is proved.\\

By Proposition \ref{lema6}, there exist $\alpha,\beta\in W$ such that $\pi(vb_1...b_k...b_1v)=\alpha v$ and $\pi(vg_1...g_n...g_1v) = \beta v$. We have ten cases depending on $\alpha$ and $\beta$:

(i) $\alpha = 1$ (or $\beta = 1$).

(ii) $\alpha b_1,\beta g_1 \in W$ and $\alpha,\beta \not = 1$.

(iii) $\alpha =\alpha_1 b_1$ and $\beta = \beta_1g_1$, where $\alpha_1\not = 1\not= \beta_1$.

(iv) $\alpha = b_1$ and $\beta = g_1$.

(v) $\alpha = b_1$, $\beta \not = 1$ and $\beta g_1 \in W$.

(vi) $\alpha=\alpha_1 b_1$ and $\beta = g_1$, where $\alpha_1 \not = 1$.

(vii) $\alpha \not = 1$, $\alpha b_1 \in W$ and $\beta = g_1$.

(viii) $\alpha =b_1$ and $\beta = \beta_1 g_1$, where $\beta_1 \not = 1$.

(ix) $\alpha =\alpha_1 b_1$ and $\beta g_1 \in W$, where $\alpha_1\not = 1 \not = \beta$.

(x) $\alpha b_1 \in W$ and $\beta = \beta_1 g_1$, where $\alpha\not = 1 \not = \beta_1$.\\

We note that if $vb_1\in W$, then $ \pi(vb_1...b_k...b_1v) = \epsilon b_1v$ by Corollary \ref{cor1}. Analogously, if $vg_1\in W$, then $ \pi(vg_1...g_n...g_1v) = \eta g_1v$. Hence in the cases (i) and (ii) we have $vb_1,vg_1\not \in W$. Since $v\in W$, then $v = v'b_1 = v''g_1$, for some $v',v''\in W$, and so $b_1 = g_1$. The same equality $b_1=g_1$ we have in the cases (iii) and (iv). Then in those cases the claim follows by affirmation 1.\\

Case (v): Since $\beta g_1\in W$, then $vg_1\not \in W$ by Corollary \ref{cor1}. Hence $v = \beta' g_1$, for some $\beta' \not = 1$. We have that $\pi(vb_1...b_k...b_1v) = b_1v$, hence by Lemma \ref{lema1d} (c) we get $\pi(vb_1b_2...b_k...b_2) = 1$ and $\pi(b_2...b_kb_{k-1}...b_2b_1v) = 1$. Using the same lemma again, we get  $\pi(b_2...b_kb_{k-1}...b_2) = \pi(vb_1)$. If $vb_1\not \in W$, then $v = \gamma b_1 = \beta' g_1$, for some $\gamma \not = 1$, and so $b_1 = g_1$. By affirmation 1, Lemma \ref{lema7a} is proved. Let $vb_1\in W$. By Proposition \ref{lema6}, we get $\pi(b_2...b_k...b_2) = \lambda b_2 = vb_1$. If $k>2$, then $\lambda \not = 1$, and hence $b_1 = b_2$, a contradiction. Then $k=2$ and $b_2 = vb_1$. From $\pi(vg_1...g_n...g_1v) = b_1v$, we have  $\pi(vg_1...g_n...g_1) = b_1$ and $\pi(vg_1...g_n...g_1b_1) = 1$ by Lemma \ref{lema1d} (c). Hence $ \pi(b_1g_1...g_n...g_1v) = 1$ and $ \pi(b_1g_1...g_n...g_1) = v$. Then $\pi(b_1g_1...g_n...g_1b_1) = vb_1 = b_2$. If $b_1 \not = g_1$, then $b_2\not \in R$ by Proposition \ref{lema6}, which is a contradiction. In the case $b_1 = g_1$ we have Lemma \ref{lema7a} by affirmation 1.\\

Case (vi): We have that $g_1 = \alpha_1 b_1$ and $\pi(vg_1g_2...g_ng_{n-1}...g_2) = 1$. Then $\pi(g_2...g_ng_{n-1}...g_2) = \pi(vg_1)$. We have two cases:

(vi.1) $n = 2$. Then $g_2 = \pi(vg_1)$. Since $g_1v =  \pi(vb_1...b_k...b_1v)$, we have that  $\pi(g_1b_1...b_k...b_1v) = 1$, and then $\pi(g_1b_1...b_k...b_1g_1) = \pi(vg_1) = g_2$, which is a contradiction with Proposition \ref{lema6}.

(vi.2) $n > 2$. By Proposition \ref{lema6}, there exists $\lambda \not = 1$ such that $\lambda g_2 = \pi(vg_1)$. Since $g_1 \not = g_2$, if follows that $vg_1\not \in W$. Then there exists $\gamma \not = 1$ such that $v = \gamma g_1$. Since $\pi(vb_1b_2...b_kb_{k-1}...b_1v)=g_1 v$, we have that $\pi(g_1 b_1b_2...b_kb_{k-1}...b_1g_1) = \gamma$. By Proposition \ref{lema6}, there exists $\lambda' \not = 1$ such that $\lambda'g_1 = \gamma$, which is a contradiction because $v =\gamma g_1 \in W$.\\

The cases (vii) and (viii) are analogous to the cases (v) and (vi), respectively.\\

Case (ix): We have that $\pi(vb_1b_2...b_kb_{k-1}...b_2\alpha_1)=1$. Then $\pi(\alpha_1 b_2...b_kb_{k-1}...b_2b_1) = v$. Since $\pi(vg_1g_2...g_ng_{n-1}...g_1v) = \alpha_1 b_1v$, it follows that $\pi(\alpha_1 b_1g_1g_2...g_ng_{n-1}...g_1) = v$, and then $\pi(\alpha_1 b_2...b_kb_{k-1}...b_2\alpha_1) = \pi(\alpha_1 b_1 g_1g_2...g_ng_{n-1}...g_1b_1\alpha_1)$.

If $b_1= g_1$, then the claim follows by affirmation 1. Suppose that $b_1\not = g_1$. By Proposition \ref{lema6} and Corollary \ref{cor0}, we have that $\alpha_1 \not = b_2$, and hence the claim follows by the induction hypothesis.\\

Case (x): By Corollary \ref{cor1}, we have $vb_1\not \in W$. Then there exists $a\not = 1$ such that $v = ab_1$. Since $\pi(vb_1b_2...b_kb_{k-1}...b_1v)= \pi(vg_1g_2...g_ng_{n-1}...g_1v)$, it follows that $\pi(ab_2...b_kb_{k-1}...b_1)=$ $ \pi(ab_1g_1g_2...g_ng_{n-1}...g_1)$. Thus $\pi(ab_2...b_kb_{k-1}...b_2a)= \pi(ab_1g_1g_2...g_ng_{n-1}...g_1b_1a)$, and the rest of the proof is analogous to (ix).
\end{proof}

As a consequence of Proposition \ref{lema6}, Corollary \ref{cor0} and  Lemma \ref{lema7a}, we have the following result.

\begin{cor}
\label{12} Let $b = b_1b_2...b_kb_{k-1}...b_1 \in S$ and $g = g_1g_2...g_ng_{n-1}...g_1 \in S$ be such that $b_i,g_j \in R$, $b_{i-1}\not = b_i$ and $g_{j-1}\not = g_j$, for all $i$ and $j$. If $\pi(b) = \pi(g)$, then $b = g$.
\end{cor}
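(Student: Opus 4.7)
The plan is to peel off the outermost common letter from $b$ and $g$ and reduce directly to Lemma \ref{lema7a}. Since $b, g \in S$ we have $k, n \geq 2$. From the hypothesis $\pi(b) = \pi(g)$, Corollary \ref{cor0} (which itself rests on Proposition \ref{lema6}) immediately gives $b_1 = g_1$; set $v := b_1 = g_1$.

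Now $v \in R \subset W \setminus \{1\}$, and the non-repetition hypothesis yields $v = b_1 \neq b_2$ and $v = g_1 \neq g_2$. Rewriting
\[
b = v\,\bigl(b_2 b_3 \cdots b_k b_{k-1} \cdots b_2\bigr)\, v, \qquad g = v\,\bigl(g_2 g_3 \cdots g_n g_{n-1} \cdots g_2\bigr)\, v,
\]
we see that $b$ and $g$ fit the template of Lemma \ref{lema7a} with the lemma's inner sequences $b_1, \ldots, b_k$ and $g_1, \ldots, g_n$ realized by our $b_2, \ldots, b_k$ and $g_2, \ldots, g_n$. All side conditions of that lemma --- $v \in W \setminus \{1\}$, $v \neq b_2$, $v \neq g_2$, membership of the inner entries in $R$, and absence of adjacent repetitions --- are inherited verbatim from the hypotheses on $b$ and $g$. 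Applying Lemma \ref{lema7a} to the equality $\pi(b) = \pi(g)$ forces $b = g$.

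The argument is essentially a direct chaining of the three cited results, so no serious obstacle arises. The only point to check is that Lemma \ref{lema7a} still applies in the borderline case where the peeled inner sequence degenerates to a single element (i.e.\ $k = 2$ or $n = 2$); this is exactly the base case of Lemma \ref{lema7a}, covered by Lemma \ref{lema7b}. With that verified, the proof is complete.
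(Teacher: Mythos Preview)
Your proof is correct and matches the paper's approach exactly: the paper gives no argument beyond the sentence ``As a consequence of Proposition~\ref{lema6}, Corollary~\ref{cor0} and Lemma~\ref{lema7a}'', and your write-up spells out precisely this chaining---Corollary~\ref{cor0} pins down $b_1=g_1$, after which Lemma~\ref{lema7a} (with Lemma~\ref{lema7b} covering the degenerate inner length) applies verbatim.
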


\begin{thm}\label{123} 
For every $g\in B(X)\setminus \{1\}$ there exist unique $g_1,...,g_m\in R(X)$
such that $g =\pi(g_1g_2...g_mg_{m-1}...g_1)$ and $g_i\not=g_{i+1}, $ for $i=1,...,m-1. $

\end{thm}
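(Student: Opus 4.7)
The uniqueness part is immediate from Corollary \ref{12}: if $\pi(g_1g_2\ldots g_mg_{m-1}\ldots g_1)=\pi(g'_1g'_2\ldots g'_{m'}g'_{m'-1}\ldots g'_1)$, the corollary forces the two symmetric words to coincide as elements of $P(X)$, and matching the two sequences of lengths $2m-1$ and $2m'-1$ yields $m=m'$ and $g_i=g'_i$ for every $i$.

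For existence, I would induct on the minimal $n$ such that $g$ admits a factorization $g=y_1y_2\ldots y_n\in W(X)$ with $y_i\in R(X)$; such an $n$ exists by Definition \ref{d3}. The base case $n=1$ is immediate, with $m=1$ and $g_1=g\in R(X)$. For $n\geq 2$, the Remark after Definition \ref{d3} ensures $y_1\ldots y_{n-1}\in B(X)$, so the induction hypothesis applied to $g'=y_1\ldots y_{n-1}$ produces $g'=\pi(h_1h_2\ldots h_kh_{k-1}\ldots h_1)$ with $h_i\in R(X)$ and $h_{i-1}\neq h_i$. By Lemma \ref{lema1d}(a) one then has
\[
g\;=\;\pi\bigl(h_1h_2\ldots h_kh_{k-1}\ldots h_1\cdot y_n\bigr),
\]
and the task is to rewrite this product in symmetric canonical form. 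I would carry out a case analysis on the relationship between $y_n$ and the outermost factor $h_1$ (which, by Proposition \ref{lema6}, coincides with the right branch of $g'$, namely $y_{n-1}$). In the cleanest case, where the combination of $y_n$ with $h_1$ already produces an element $u\in R(X)$, the canonical form of $g$ is obtained by prepending $u$ as a new outermost layer of the symmetric form. In the remaining cases one uses Lemma \ref{lema1d}(b) and Corollary \ref{cor1} to fold the resulting expression, peeling off and recombining the $h_i$'s until a valid symmetric form with entries in $R(X)$ emerges; if a collapse $y_n=h_1$ occurs, the product shortens and one invokes the induction on $n$ again to finish.

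The main obstacle is verifying that every $g_i$ produced during this construction actually belongs to $R(X)$. By Definition \ref{d2} this requires checking (i) the minimality $g_i=\min\{g_i^t,g_i^{tt}\}$, (ii) both $g_i^t,g_i^{tt}\in W(X)$, (iii) $g_i^t\neq g_i^{tt}$ (equivalently $g_i\notin S(X)$), and (iv) $g_i$ itself decomposes into $R_{n'-1}$-elements for suitable $n'$. Corollary \ref{00}, which exhausts the possible shapes of $\pi$ on left-normed words, together with Lemma \ref{l1} describing the orbit $y^*$ and the ordering on $W(X)$, provides the bookkeeping needed to verify these conditions inductively. The most delicate sub-case occurs when Corollary \ref{00} falls under its item (c) or (d): there the ``peak'' of the new symmetric form arises from a proper-subword collision, and one must argue by minimality against competing candidates in $y^*$ to ensure the outcome genuinely lies in $R(X)$ rather than accidentally in $S(X)$ or violating the minimality condition defining $D(X)$.
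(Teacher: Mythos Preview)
Your uniqueness argument via Corollary~\ref{12} is exactly what the paper does.

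For existence, your plan diverges from the paper's and, as written, has a real gap. You induct on the minimal number $n$ of $R(X)$-factors in a decomposition $g=y_1\cdots y_n$, apply the hypothesis to $g'=y_1\cdots y_{n-1}=\pi(h_1\cdots h_k\cdots h_1)$, and then try to convert $\pi(h_1\cdots h_k\cdots h_1\,y_n)$ back into symmetric form. The difficulty is that appending a single factor to a symmetric word does not yield a symmetric word, and your case analysis does not show how to recover one within the induction. Concretely: since $g$ ends in $y_n$, Proposition~\ref{lema6} forces the outermost layer of $s(g)$ to be $g_1=y_n$, not some ``combination of $y_n$ with $h_1$''; unwinding via Lemma~\ref{lema1d} then shows that the inner block $g_2\cdots g_m\cdots g_2$ must satisfy $\pi(g_2\cdots g_m\cdots g_2)=\pi(y_n g')=\pi(y_n y_1\cdots y_{n-1})$. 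But this new element can again have $n$ factors in $R(X)$ (and the same length as $g$ when $y_n y_1\cdots y_{n-1}\in W$), so your induction variable need not decrease. The remarks about ``peeling off and recombining the $h_i$'s'' and invoking Corollary~\ref{00} do not supply a termination argument, and the checks (i)--(iv) you list for membership in $R(X)$ never actually need to be carried out if one sets the induction up correctly.

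The paper avoids this entirely by inducting on $|g|$ and using the canonical decomposition $g=a_1a_2\cdots a_s$ with $|a_1|=1$ together with the transpose operation of Definition~\ref{g1}. The point is that the \emph{middle} of $s(g)$ is essentially $g^t$ (or $g^{tt}$): when $g^t\in R(X)$ one takes $s(g)=a_s a_{s-1}\cdots a_1\,g^t\,a_1\cdots a_{s-1}a_s$ directly, and the remaining cases ($g^t\in W\cap S$, or $g^t\notin W$, or $g^{tt}\notin W$) reduce by induction on $|g|$ because $|\pi(g^t)|<|g|$ there. This gives the symmetric form in one stroke rather than trying to grow it one layer at a time.
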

\begin{proof}
Let $g=a_1a_2...a_s\in B,$ where $a_s=b_1b_2...b_l ,$ $|a_1|=|b_1|=1$, and $a_i,b_j\in R,$ for all $i$ and $j$. First we will prove by induction on $|g|$ that there exists $s(g) = g_1g_2...g_mg_{m-1}...g_1$ such that $\pi(s(g)) = g$ and $g_i\not=g_{i+1}, $ for $i=1,...,m-1.$ If $|g|=1$ or $g\in R$, then $s(g) = g$. Now suppose that $|g|>1$ and $g\not \in R$. We have four cases:

(i) $g^t,g^{tt}\in W$ and $g^t\not = g^{tt}$. Then 

\begin{center}
$s(g) = \left\lbrace \begin{array}{rl}
a_sa_{s-1}...a_2a_1g^ta_1...a_{s-1}a_s, & \textrm{ if } g^t\in R,\\
a_sb_1b_2...b_lg^{tt}b_l...b_1a_s, & \textrm{ if } g^{tt}\in R.
\end{array} \right. $
\end{center}

(ii) $g^t\in W\cap S$. Then there exist $c_1,...,c_r\in R$ such that $g^t =c_1c_2...c_rc_{r-1}...c_1$. Thus $g = \pi(a_sa_{s-1}...a_2a_1c_1c_2...c_rc_{r-1}...c_1a_1...a_{s-1}a_s)$, and it is clear that we can get $s(g)$ from this equation.

(iii) $g^t\not \in W$. Since $\pi(g)\not = 1$, we have that $\pi(g^t)\not = 1$ by Lemma \ref{lema1d}. Then $1\leq |\pi(g^t)|< |g|$. It is not difficult to see that $\pi(g^t)\in B$. By the induction hypothesis, there exist $c_1,...,c_r\in R$ such that $\pi(g^t) = \pi(c_1c_2...c_rc_{r-1}...c_1)$, and by using similar arguments as in (ii) we get $s(g)$.

(iv) $g^{tt}\not \in W$. This case is analogous to (iii).

Now we need to prove that $s(g)$ is unique. But this is a consequence of Corollary \ref{12}.
\end{proof}

\section{Main theorem}

\begin{defi}\label{d31}
In notation of Theorem \ref{123} we put for any $g\in B\setminus\{1\}:$ 

\begin{center}
$s(g) =g_1g_2...g_mg_{m-1}...g_1$.
\end{center}

Now, define a multiplication $\circ$ on the set $B(X)$ by:

(i) $x \circ 1 = 1\circ x = x$,

(ii) $x\circ y= \pi(x y_1 y_2... y_m y_{m-1}... y_1),$ where $s(y)= y_1 y_2... y_m y_{m-1}... y_1$.

Notice that the identities $x\circ x = 1$ and $(x\circ y)\circ y=x$ can be easily obtained from the definition above.
\end{defi}

\begin{thm}
\label{mt}
The set $B=B(X)$ with multiplication $\circ$ defined above is a free Bol loop of exponent $2$ with free set of generators $X.$
\end{thm}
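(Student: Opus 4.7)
The plan is to verify in sequence that $(B(X), \circ)$ is a loop of exponent two satisfying the right Bol identity, and then to establish freeness via the universal extension property. All four reductions rely on the canonical form machinery of Section~4, in particular Theorem~\ref{123} and Corollary~\ref{12}.

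For the loop axioms and exponent two, the identity element and $x \circ x = 1$ are immediate from Definition~\ref{d31}, and $(x \circ y) \circ y = x$ follows by unfolding the definition and applying Lemma~\ref{lema1d}(a),(b) to cancel the adjacent pairs $y_i y_i$ from the center outward — hence each right translation $R_y$ is an involution. Injectivity of $L_x$ follows from Corollary~\ref{12} combined with Lemma~\ref{lema7a}: the equation $\pi(x\, s(y)) = \pi(x\, s(y'))$ forces $s(y) = s(y')$. Surjectivity of $L_x$ will follow once the Bol identity is in hand, by exhibiting the left divisor as the element whose canonical form is the appropriate reduced concatenation.

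For the Bol identity, unfolding both $x \circ ((y \circ z) \circ y)$ and $((x \circ y) \circ z) \circ y$ via Definition~\ref{d31}(ii) produces $\pi$ of two highly nested concatenations of canonical forms. Using Lemma~\ref{lema1d}(a) to move $\pi$ across juxtapositions, Lemma~\ref{lema1d}(b) to cancel adjacent equal letters, and Corollary~\ref{12} to identify symmetric reduced words, both sides reduce to the same element of $W(X)$. The palindromic structure of $s(\cdot)$ is engineered precisely so that the two bracketings coincide after reduction; the case analysis mirrors that in the proof of Lemma~\ref{lema7a} and is guided by the four cases of Corollary~\ref{00}.

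For freeness, given any Bol loop $B'$ of exponent two and any map $f: X \to B'$, extend $f$ to $\tilde f: P(X) \to B'$ inductively by $\tilde f(uv) = \tilde f(u) \cdot_{B'} \tilde f(v)$. Since $B'$ satisfies $aa = 1$ and $(ab)b = a$ (both consequences of exponent two and right alternativity), $\tilde f$ respects the elementary reductions of $\pi$ and descends to $\tilde f: B(X) \to B'$. The homomorphism property $\tilde f(x \circ y) = \tilde f(x) \cdot \tilde f(y)$ reduces to the combinatorial identity
\[
a \cdot (b_1 b_2 \cdots b_m b_{m-1} \cdots b_1) = a b_1 b_2 \cdots b_m b_{m-1} \cdots b_1
\]
(both sides left-associated as in the paper's convention), valid in every right Bol loop. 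This is proved by induction on $m$ by writing $y = (b_1 y') b_1$ with $y' = b_2 \cdots b_m \cdots b_2$ and invoking the Bol identity $x((yz)y) = ((xy)z)y$ to peel off the outer $b_1$'s. Uniqueness of $\tilde f$ is immediate since $X$ generates $B(X)$ by Theorem~\ref{123}. The main obstacle is verifying the Bol identity itself: the bookkeeping between the nested $\pi$-reductions on the two sides is the most intricate step, though the machinery of Lemmas~\ref{lema7a}, \ref{lema7b} and Corollary~\ref{12} is tailor-made for exactly this kind of reconciliation.
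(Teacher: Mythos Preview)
Your approach differs substantially from the paper's. You attempt a direct verification of the loop axioms and the Bol identity; the paper explicitly declines this route (``It is possible to prove this directly, but in this case we have to consider many particular cases'') and instead passes through the Baer correspondence. Concretely, the paper builds the free $2$-group $G = \prod_{y\in R(X)}^{\star}\langle R_y \mid R_y^2 = I_d\rangle$ acting on $B$ by $bR_y = b\circ y$, sets $H = \operatorname{Stab}(1)$ and $B' = \{I_d\}\cup\{R_y\}^G$, proves that $B'$ is a twisted subgroup and a right transversal of $H$ (Lemmas~\ref{lem2} and~\ref{lem13}), invokes Proposition~\ref{prop1} to conclude that $(B',*)$ is a Bol loop, and then exhibits an explicit isomorphism $(B',*)\cong(B,\circ)$. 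The Bol identity and the bijectivity of the $L_x$ thus come for free from Proposition~\ref{prop1}, rather than being checked by hand.

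Your freeness argument is sound, and the Bol identity can indeed be obtained along the lines you indicate: once one observes via Corollary~\ref{12} that $s\bigl((y\circ z)\circ y\bigr)$ is the adjacent-duplicate reduction of the palindrome $s(y)\,s(z)\,s(y)$, both sides of the Bol identity unwind to $\pi\bigl(x\,s(y)\,s(z)\,s(y)\bigr)$ by Lemma~\ref{lema1d}(a),(b).

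The genuine gap is in the loop axioms. Your appeal to Lemma~\ref{lema7a} for injectivity of $L_x$ does not apply: that lemma treats palindromes $v\,b_1\cdots b_k\cdots b_1\,v$, whereas $x\,s(y)$ has no such symmetry, and the desired cancellation (from $\pi(x\,s(y)) = \pi(x\,s(y'))$ conclude $y=y'$) is not a consequence of the cited results. One can repair this by multiplying both sides on the right by $s(x)$ to produce genuine palindromes with letters in $R(X)$, invoking Corollary~\ref{12}, and then cancelling in the free $2$-group on $R(X)$---but at that point you have rebuilt the paper's group $G$. Surjectivity of $L_x$ is left even vaguer; it does follow once the Bol identity and the bijectivity of each $R_y$ are established, but the clean way to extract it is again Proposition~\ref{prop1} applied to the Baer triple $(\langle R_y\rangle,\operatorname{Stab}(1),\{R_y\})$, which is precisely the mechanism your outline was trying to bypass.
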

\begin{proof} 
It is clear that $X$ generates $B$ and if $B$ is a Bol loop, then the construction of $R(X)$ and Theorem \ref{123} give us a natural way to extend a mapping between $X$ and another Bol loop $L$ of exponent two to a homomorphism from $B$ into $L$. So we only have to prove that $B$ is a Bol loop. It is possible to prove this directly, but in this case we have to consider many particular cases. We choose the other way based on the connection of Bol loops with twisted subgroups described in the Preliminaries.

Let $G=\prod_{y\in R(X)}\star <R_y | R_y^2=I_d>$ be a free $2$-group. The group $G$ acts on $B:$ $bR_y=b\circ y$ and $bI_d = b$. Then the set $H=\{g\in G | 1^g=1\}$ is a subgroup of $G$, where $1$ is the empty word of $B(X)$.

Now, let $B'=\{I_d\}\cup\{R_y\,|\, y\in R(X)\}^G$. Note that $R_yR_zR_y\in B'$, for all $y,z\in R(X)$, and then $B'$ is a twisted subgroup of $G$.

\begin{lem}
\label{lem2}
$G = HB'$
\end{lem}

\textit{Proof of Lemma $\ref{lem2}$.} Let $g=\prod_{i=1}^mR_{g_i}\in G$ and $y=g_1g_2...g_m$. Then $1^g=\pi(y)$. If $\pi(y) = 1$, then $g\in H$.

Suppose that $\pi(y) \not = 1$ and consider $s(\pi(y))= y_1y_2...y_ky_{k-1}...y_1$, where $y_i\in R(X)$. Note that $S(g)=R_{y_1}R_{y_2}... R_{y_k}R_{y_{k-1}}...R_{y_1} \in B'$. We have that $1^{S(g)}= \pi(s(\pi(y))) = \pi(y)$, and so $\pi(y)^{S(g)} = 1$. Hence $gS(g)\in H$ and $g=(gS(g))S(g) \in HB'$.

Lemma \ref{lem2} is proved.

\begin{lem}
\label{lem13}
$H\cap(B'B') =  \{I_d\}.$
\end{lem}
\textit{Proof of Lemma $\ref{lem13}$.} Let $b = R_{b_1}R_{b_2}...R_{b_m}R_{b_{m-1}}...R_{b_1} \in B'$ and $c\in B'$ be such that $bc\in H.$ By Lemma \ref{prop1d}, it follows that $ \pi(b_1...b_m...b_1)\not = 1$, and then $c\not = I_d$. Consider $c = R_{c_1}R_{c_2}...R_{c_k}R_{c_{k-1}}...R_{c_1}$. Hence 

\begin{equation}\label{e1}
(...(b_1\circ b_2)...)\circ b_m)\circ b_{m-1})...\circ b_1)\circ c_1) ...)\circ c_k) ...)\circ c_1=1.
\end{equation}

Since $(x\circ y)\circ y=x$, we get $(...(b_1\circ b_2)...)\circ b_m)\circ b_{m-1})...\circ b_1)=$ $(...(c_1 \circ c_2) ...)\circ c_k) ...)\circ c_1$. Then $\pi(b_1b_2...b_mb_{m-1}...b_1)= \pi(c_1c_2 ...c_k ...c_1).$ By Corollary \ref{12}, we get that $m=k$ and $c_i=b_i$, for all $i$.

Lemma \ref{lem13} is proved.

As a consequence of the Lemmas above, we have that $B'$ is a right transversal of $H$ in $G$, and then $(G,H,B')$ is a Baer triple. By Proposition \ref{prop1}, we get that $B'$ with the operation $*$ defined by

\begin{center}
$b*b' = c$, where $bb' = hc$, for some $h\in H$,
\end{center}

is a Bol loop of exponent two.

Now, let us prove that $(B,\circ)\cong (B',*)$. Define $\varphi:B' \to B$ by 
\begin{center}
$\varphi(R_{y_1}R_{y_2}...R_{y_m}R_{y_{m-1}}...R_{y_1}) = \pi(y_1y_2...y_my_{m-1}...y_1)$ and $\varphi(I_d) = 1$. 
\end{center}

By Lemma \ref{prop1d} and Theorem \ref{123}, we get that $\varphi$ is a bijection.

Let $b = R_{y_1}R_{y_2}...R_{y_m}R_{y_{m-1}}...R_{y_1} \in B'$ and $c = R_{z_1}R_{z_2}...R_{z_n}R_{z_{n-1}}...R_{z_1} \in B'$. Consider $y = y_1y_2...y_my_{m-1}...y_1$, $z = z_1z_2...z_nz_{n-1}...z_1$ and $u = yz_1...z_n...z_1$. Note that $\varphi(b)\circ \varphi(c) = \pi(y)\circ \pi(z) = \pi(u)$.

If $\pi(u) = 1$, then $\pi(y) = \pi(z)$. By Corollary \ref{12}, we have $y = z$. Thus $b =c$ and $\varphi(b*c)= \varphi(I_d) = 1 = \varphi(b)\circ \varphi(c)$.

If $\pi(u) \not = 1$, consider $s(\pi(u)) = u_1u_2...u_ru_{r-1}...u_1$ and $g = R_{u_1}R_{u_2}...R_{u_r}R_{u_{r-1}}...R_{u_1}$. By the proof of Lemma \ref{lem2}, we have that $b*c = g$. Hence $\varphi(b*c) = \varphi(g) = \pi(u) = \varphi(b)\circ \varphi (c)$. 

Therefore $\varphi$ is an isomorphism and we have that $(B,\circ)$ is a Bol loop of exponent two.
\end{proof}

\begin{prop}
\label{prophg} $H$ is a core-free subgroup of $G$.
\end{prop}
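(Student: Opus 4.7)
The plan is to reduce the claim that $H$ is core-free to the faithfulness of the natural action of $G$ on $B$, and then prove faithfulness using the uniqueness of the canonical palindromic form. First, the action $bR_y=b\circ y$ of $G$ on $B$ is transitive: for any $b\in B\setminus\{1\}$ with canonical decomposition $s(b)=b_1b_2\cdots b_k b_{k-1}\cdots b_1$, the product $R_{b_1}R_{b_2}\cdots R_{b_k}R_{b_{k-1}}\cdots R_{b_1}\in G$ sends $1$ to $b$, as recorded by the definition of $\varphi$ in the proof of Theorem \ref{mt}. Since $H=\mathrm{Stab}_G(1)$ in a transitive action, its core coincides with the kernel $K=\{g\in G\mid b^g=b \text{ for all } b\in B\}$ of the action, so it suffices to show $K=\{I_d\}$.

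Assume for contradiction that $g\in K\setminus\{I_d\}$, and write $g=R_{g_1}R_{g_2}\cdots R_{g_n}$ in reduced form in the free $2$-group $G$, with $g_i\in R(X)$, $g_i\neq g_{i+1}$, and $n\geq 1$. The cases $n=1,2$ are immediate: if $n=1$, then $1^g=g_1\neq 1$, and if $n=2$, then $1^g=\pi(g_1g_2)\neq 1$ since $g_1\neq g_2$ (by case (ii) of the definition of $\pi$, $\pi(g_1g_2)=1$ iff $g_1=g_2$). For $n\geq 3$, the condition $b^g=b$ for each $b\in B$ with $s(b)=y_1\cdots y_k y_{k-1}\cdots y_1$ unfolds via the definition of $\circ$ into a palindromic $\pi$-relation of the form $\pi(y_1\cdots y_k y_{k-1}\cdots y_1\, g_1\cdots g_n)=\pi(y_1\cdots y_k y_{k-1}\cdots y_1)$. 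Combined with $1^g=1$ and repeated application of Lemma \ref{lema1d}(c),(d) to rearrange into fully palindromic form over $R(X)$, Corollary \ref{12} then forces coincidences among the $g_i$'s that contradict reducedness.

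The main obstacle is the combinatorial bookkeeping needed to carry out this rearrangement: the iterated left-associated compositions produced by $\circ$ must be translated into palindromic $\pi$-products for Corollary \ref{12} to apply. A natural strategy is induction on $n$, choosing the test element $b$ to interact most effectively with $g_1$ or $g_n$; for example, $b=g_1\in R(X)$ yields $g_2\circ g_3\circ\cdots\circ g_n=g_1$, which after analysis represents a shorter nontrivial element of $K$. The case analysis closely mirrors that of Lemma \ref{lema7a} and Proposition \ref{lema6}, navigating the four possibilities of Corollary \ref{00} at each step.
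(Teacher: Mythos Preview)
Your reduction to faithfulness of the $G$-action on $B$ is correct and is essentially what the paper does implicitly (the paper takes $\phi\in N\trianglelefteq G$ with $N\le H$ and uses normality to deduce that $\phi$ fixes $y_2$, which is the only consequence of faithfulness actually needed). The gap is in your execution of the main step.

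Your claim that testing with $b=g_1$ ``after analysis represents a shorter nontrivial element of $K$'' is not justified and does not hold as stated: the relation $\pi(g_2\cdots g_n)=g_1$ is equivalent, via Lemma~\ref{lema1d}, to the relation $\pi(g_1\cdots g_n)=1$ you already have from $1^g=1$, so it carries no new information and certainly does not produce a shorter element of the kernel. Conjugating $g$ by $R_{g_1}$ gives a cyclic shift of the same length, not a shorter word. Your appeal to Corollary~\ref{12} is also misdirected: that corollary compares two \emph{palindromic} $\pi$-expressions, but none of the relations you write down are palindromic, and you give no mechanism for producing one.

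The paper's argument is short and avoids all of this. It tests with the element $y_2$ (not $y_1$): from $\pi(y_1\cdots y_n)=1$ one gets $\pi(y_n\cdots y_2y_1y_2)=y_2$ by Lemma~\ref{lema1d}, and since $\phi$ fixes $y_2$ one then computes
\[
y_2=\pi(y_n\cdots y_2y_1y_2)\phi=\pi(y_n\cdots y_2\,y_1\,y_2\,y_1\,y_2\cdots y_n),
\]
a genuine palindrome over $R(X)$ with $k=n+1>1$ and outermost letter $y_n\in R$. Proposition~\ref{lema6} says such a palindrome can never $\pi$-reduce to an element of $R$, yet $y_2\in R$; contradiction. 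The point you are missing is that the right tool here is Proposition~\ref{lema6} (the structural statement that $\pi$ of a nontrivial $R$-palindrome is never in $R$), not Corollary~\ref{12}, and the right test element is $y_2$, chosen so that prepending the reversed word and appending the original word to it yields a palindrome.
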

\begin{proof} Let $N\leq H$ be such that $N$ is normal in $G$. Suppose that $N\not = \{I_d\}$. Then there exists $\phi = R_{y_1}R_{y_2}...R_{y_n}\in N$, with $n>1$, $y_i\in R(X)$ and $y_i \not = y_{i+1}$, for all $i$. 

Since $1\phi = 1$, it follows that $\pi(y_1y_2...y_n) = 1$. Then 

\begin{equation}
\label{eqhg}
\pi(y_n...y_2y_1y_2) = y_2.
\end{equation}

Since $N \triangleleft G$ and $N\leq H$, we have $R_{y_2}\phi R_{y_2}\in H$, and then $1R_{y_2}\phi R_{y_2} = 1$. Thus $1R_{y_2}\phi  = 1R_{y_2}$, and hence $y_2\phi =y_2$. By \eqref{eqhg}, we get $\pi(y_n...y_2y_1y_2)\phi = y_2$, and then $y_2  = \pi(y_n...y_2y_1y_2y_1y_2...y_n)$, which is a contradiction with Proposition \ref{lema6}.
\end{proof}

Now we will determine the nuclei and the center of $B(X)$. Firstly, we need the following lemma.

\begin{lem}
\label{lemanuc} Let $x,z\in B(X)\setminus \{1\}$. Then $z = x\circ (x\circ z)$ if and only if $x= z$.
\end{lem}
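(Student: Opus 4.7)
The $(\Leftarrow)$ direction is immediate: if $x=z$ then $x\circ(x\circ z)=x\circ(x\circ x)=x\circ 1=x=z$, using the identity axiom and the exponent-two condition $x\circ x=1$.

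For the $(\Rightarrow)$ direction, suppose $z=x\circ(x\circ z)$ with $x,z\in B(X)\setminus\{1\}$, and set $y=x\circ z$, so that the hypothesis reads $x\circ y=z$. The right inverse property $(u\circ v)\circ v=u$, applied to both $(x\circ z)\circ z=x$ and $(x\circ y)\circ y=x$, yields $y\circ z=x$ and $z\circ y=x$. Next, applying the right Bol identity $x\circ((y\circ z)\circ y)=((x\circ y)\circ z)\circ y$ with the label substitutions $(x,y,z)\mapsto(x,x,z)$ and $(x,y,z)\mapsto(x,x,y)$, and simplifying using $x\circ x=1$, gives
\[
z\circ x=x\circ(y\circ x) \quad\text{and}\quad y\circ x=x\circ(z\circ x).
\]
Composing the two, the element $u:=y\circ x$ satisfies the same self-referential relation as $z$, namely $u=x\circ(x\circ u)$.

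If we can establish that $u\in\{1,x\}$, the proof is complete. For $u=1$ gives $y\circ x=1$, hence $y=x$, and then $x\circ z=y=x$ forces $z=1$, contradicting $z\neq 1$; whereas $u=x$ gives $y\circ x=x$, hence $y=1$, and then $x\circ z=1$ forces $x=z$, which is the desired conclusion.

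To establish $u\in\{1,x\}$, the plan is to extract structural constraints via Proposition~\ref{lema6} and then match two canonical forms with Lemma~\ref{lema7a}. Writing $s(y)=y_1\cdots y_k\cdots y_1$ and $s(x)=x_1\cdots x_p\cdots x_1$, iterated application of Lemma~\ref{lema1d}(a) identifies $\pi(s(x)\,s(y)\,s(x))=(x\circ y)\circ x=z\circ x$. The word $s(x)\,s(y)\,s(x)$ is a palindrome in $S(X)$ with outer letter $x_1\in R(X)$, and (provided $x_1\neq y_1$) Proposition~\ref{lema6} forces $z\circ x=\lambda x_1$ for some $\lambda\in W(X)\setminus\{1\}$. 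A parallel analysis for the symmetric word constructed from $u$, together with the equation $u=x\circ(x\circ u)$, produces a second symmetric word with the same outer factor $x$ and the same $\pi$-value $z\circ x$. Lemma~\ref{lema7a} then equates their inner palindromic factors, and Corollary~\ref{12} converts this into equality of canonical forms, from which $u\in\{1,x\}$ follows. The main obstacle is the case analysis: one must separately handle whether $x\in R(X)$ or $x\notin R(X)$, the degenerate case $x_1=y_1$ where the boundary between $s(x)$ and $s(y)$ collapses (handled via Lemma~\ref{lema1d}(b) by peeling off cancellations before applying Proposition~\ref{lema6}), and the various cancellation patterns paralleling the ten cases in the proof of Lemma~\ref{lema7a}; in each subcase Proposition~\ref{lema6} and Corollary~\ref{12} suffice to pin down the structure and force $u\in\{1,x\}$.
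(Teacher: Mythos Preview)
The detour through $u=y\circ x$ is circular: you derive that $u$ satisfies exactly the same relation $u=x\circ(x\circ u)$ that $z$ does, so any method that forces $u\in\{1,x\}$ from that relation alone would force $z\in\{1,x\}$ directly, without ever introducing $u$. The extra identities you record (e.g.\ $z\circ x=x\circ u$) do not break the symmetry, since they are formal consequences of the Bol identity and exponent two for \emph{any} solution of the equation. In your attack on $u\in\{1,x\}$ you exhibit one symmetric word, $s(x)\,s(y)\,s(x)$, whose $\pi$-image is $z\circ x$; but the promised ``second symmetric word with the same outer factor $x$ and the same $\pi$-value $z\circ x$'' is never specified, and there is no obvious candidate: for instance $s(x)\,s(u)\,s(x)$ has $\pi$-image $(x\circ u)\circ x=(z\circ x)\circ x=z$, not $z\circ x$. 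The remainder of the plan---a case split ``paralleling the ten cases in the proof of Lemma~\ref{lema7a}''---is announced but not executed, so as written this is not a proof.

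The paper's argument is short and does not invoke the Bol identity at all. Write $s(z)=z_1\cdots z_n\cdots z_1$ and $s(x\circ z)=u_1\cdots u_m\cdots u_1$. By the definition of $\circ$,
\[
\pi(u_1\cdots u_m\cdots u_1)=x\circ z=\pi\bigl(x\,z_1\cdots z_n\cdots z_1\bigr),
\qquad
\pi\bigl(x\,u_1\cdots u_m\cdots u_1\bigr)=z=\pi(z_1\cdots z_n\cdots z_1).
\]
Combining these via Lemma~\ref{lema1d} gives
\[
\pi(u_1\cdots u_m\cdots u_1)=\pi\bigl(z_1\cdots z_n\cdots z_1\,u_1\cdots u_m\cdots u_1\,z_1\cdots z_n\cdots z_1\bigr),
\]
an equality between $\pi$-images of two symmetric words with letters in $R(X)$. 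Corollary~\ref{12} then forces $m=n$ and $u_i=z_i$ for all $i$, i.e.\ $x\circ z=z$, hence $x=1$, a contradiction. This is the argument you should use: three lines, one application of Corollary~\ref{12}, and no case analysis.
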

\begin{proof}
Suppose that $x\not = z$ and $z = x\circ (x\circ z)$. Consider $s(z) = z_1z_2...z_nz_{n-1}...z_1$ and $s(x\circ z) = u_1u_2...u_mu_{m-1}...u_1$. Then

\begin{eqnarray}
\label{lnuc1}
\pi(u_1...u_m...u_1) = x\circ z = \pi(xz_1...z_n...z_1),
\\
\label{lnuc2}
\pi(xu_1...u_m...u_1) = x\circ (x\circ z) = z = \pi(z_1...z_n...z_1)
\end{eqnarray}

By \eqref{lnuc1} and \eqref{lnuc2}, we get $\pi(u_1...u_m...u_1) = \pi(z_1...z_n...z_1u_1...u_m...u_1z_1...z_n...z_1)$. Then $m = n$ and $u_i = z_i$, for all $i$, by Corollary \ref{12}. Therefore $x = 1$, a contradiction.
\end{proof}

As a consequence of Lemma \ref{lemanuc}, we have that $(x\circ(x\circ z))\circ z \not = 1 = x\circ ((x\circ z)\circ z)$, for every $x,z\in B(X)\setminus \{1\}$ such that $x\not = z$. It follows that   $ N_ \lambda (B), N_ \mu (B) $ and $ N_ \rho (B)$ contain only the identity element $1$. Therefore we established the following result.

\begin{cor}
\label{cornuc} The nuclei and the center of $B(X)$ are trivial.
\end{cor}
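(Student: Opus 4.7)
The plan is to deduce the corollary directly from Lemma \ref{lemanuc} combined with the right alternative law $(u\circ v)\circ v = u$, which holds in any Bol loop of exponent two. The central observation, noted already in the paragraph preceding the statement, is that for any $x,z\in B(X)\setminus\{1\}$ one has
\[
x\circ\bigl((x\circ z)\circ z\bigr) \;=\; x\circ x \;=\; 1,
\]
while Lemma \ref{lemanuc} guarantees $x\circ(x\circ z)\neq z$ as soon as $x\neq z$, whence
\[
\bigl(x\circ(x\circ z)\bigr)\circ z \;\neq\; 1.
\]
Thus, for distinct nontrivial $x,z$, the two expressions $x\circ((x\circ z)\circ z)$ and $(x\circ(x\circ z))\circ z$ differ.

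Next I would translate this single inequality into the failure of each of the three nucleus identities by a suitable substitution. For $a\in N_\lambda(B)$, the defining identity $a\circ(u\circ v) = (a\circ u)\circ v$ with $a=x$, $u=x\circ z$, $v=z$ would force the two displayed expressions above to coincide; so $x\in N_\lambda(B)\setminus\{1\}$ implies $x=z$ for every $z\in B(X)\setminus\{1\}$, which is impossible once $|B(X)|\geq 3$. An entirely analogous substitution $u=x$, $v=x\circ z$, $a=z$ in $u\circ(v\circ a)=(u\circ v)\circ a$ rules out non-identity elements from $N_\rho(B)$. For the middle nucleus, if $a\in N_\mu(B)$, then choosing $u=a$ and arbitrary $v$ in $u\circ(a\circ v)=(u\circ a)\circ v$ yields $a\circ(a\circ v)=v$, and Lemma \ref{lemanuc} then forces $a=v$ for every nontrivial $v$; again this is possible only when $a=1$.

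Having shown $N_\lambda(B)=N_\mu(B)=N_\rho(B)=\{1\}$, the triviality of the center follows immediately since $\mathcal{Z}(B)\subseteq N(B) = N_\lambda(B)\cap N_\mu(B)\cap N_\rho(B)$. The main (essentially only) obstacle is picking the correct substitutions to convert the single inequality $(x\circ(x\circ z))\circ z \neq x\circ((x\circ z)\circ z)$ into a disproof of each of the three nuclear identities; once this bookkeeping is done, Lemma \ref{lemanuc} does all of the work and no further combinatorial argument on the canonical form is needed. The degenerate case $|X|=1$ (where $B(X)\cong\mathbb{Z}/2\mathbb{Z}$) can be dismissed in one line, so it is safe to assume $|X|\geq 2$ throughout, ensuring the availability of at least two distinct nontrivial elements used in the substitutions.
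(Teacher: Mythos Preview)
Your proposal is correct and follows essentially the same approach as the paper: the paper derives the same inequality $(x\circ(x\circ z))\circ z \neq 1 = x\circ((x\circ z)\circ z)$ from Lemma \ref{lemanuc} and then asserts that all three nuclei are trivial, while you spell out explicitly which substitutions kill each nucleus. Your direct treatment of $N_\mu$ via $a\circ(a\circ v)=v$ is a minor (and perfectly valid) variant of inserting $a$ as the middle factor in the displayed inequality, but the underlying argument is identical.
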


\section{Open problems}

We finish this paper with two conjectures.

If $|X|>1$, it is easy to construct proper subloops of $B(X)$ that are free Bol loops of exponent $2$. In the case of free loops (infinite exponent), it is well known that all subloops of these loops are free \cite[Corollary 1, pg. 16]{B71}.

\begin{conj}
\label{conj1}
Every subloop of a free Bol loop of exponent two is free.
\end{conj}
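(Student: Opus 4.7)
The strategy is to imitate the Nielsen--Schreier argument through the twisted-subgroup correspondence set up in Section~5. Let $L$ be a subloop of $B=B(X)$, and identify $B$ with the twisted subgroup $B' \subset G = \star_{y \in R(X)}\langle R_y \mid R_y^2 = 1\rangle$ via the isomorphism $\varphi$ of Theorem \ref{mt}. Write $L' = \varphi^{-1}(L) \subset B'$ and set $G_L = \langle L'\rangle \leq G$, $H_L = G_L \cap H$. Since $L$ is a subloop of $B$ and $1 \in L$, the $G_L$-orbit of $1$ in $B$ is exactly $L$, so $L'$ is a right transversal of $H_L$ in $G_L$; because every element of $B'$ is a palindrome of involutions (hence itself an involution), $L' = G_L \cap B'$ is also a twisted subgroup of $G_L$. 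Thus $(G_L, H_L, L')$ is a Baer triple whose associated Bol loop is, via Proposition \ref{prop1}, isomorphic to $(L,\circ)$.

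The plan is to construct a subset $Y \subseteq L$ playing the role that $X$ plays in $B$ and then to apply the template of Section~5 to $Y$. Concretely, I would define $Y$ recursively as those elements $y \in L \setminus \{1\}$ that cannot be expressed as $\pi(u_1 u_2 \cdots u_k \cdots u_1)$ with $k \geq 2$, $u_i \in Y$ already constructed, and $u_i \neq u_{i+1}$. By construction $Y$ generates $L$ in the operation $\circ$. Proving that $Y$ is a free set of generators would then reduce, following the argument of Theorem \ref{mt}, to the statement that $G_L$ is a free $2$-group on $\{R_y : y \in Y\}$; once this is established, the construction of $R(Y) \subseteq L$ via Definition \ref{d2}, the canonical form of Theorem \ref{123}, and the identification with a Baer triple in Section~5 all transfer to $L$ verbatim and give $L \cong B(Y)$.

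The main obstacle is the independence step. The uniqueness of the canonical form in $B$, proved in Proposition \ref{lema6}, Corollary \ref{cor0}, and Lemma \ref{lema7a}, rests on the closure properties of $R(X)$ captured in Corollary \ref{l111} together with the condition $R(X) \cap S(X) = \emptyset$; there is no a priori reason that the intrinsically defined candidate $Y \subset L$ should enjoy analogous closure, because intermediate subwords arising in the computation of $\pi$ on an element of $L$ generally lie in $B \setminus L$. A plausible route around this uses the Kurosh subgroup theorem: the subgroup $G_L$ of the free product $G$ decomposes as $F \star (\star_i g_i \langle R_{y_i}\rangle g_i^{-1})$ for some free group $F$ and involutions $R_{y_i}$, and one would need to show that $F = 1$, for instance by arguing that any element of infinite order in $G_L$ would produce two distinct canonical forms in $B$ for the same element of $L$, contradicting Theorem \ref{123}. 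With $F=1$ in hand, the Kurosh decomposition exhibits $G_L$ as a free $2$-group on a set biject\-ing with $Y$, and the construction of Section~5 then supplies the isomorphism $L \cong B(Y)$.
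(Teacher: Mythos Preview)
The statement you are attempting to prove is Conjecture~\ref{conj1}, which the paper explicitly lists as an \emph{open problem}; there is no proof in the paper to compare against. Your write-up is accordingly a strategy rather than a proof, and you yourself flag the independence step as ``the main obstacle''. That is the right place to be worried, but the difficulties run deeper than you indicate.

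First, even if Kurosh gives $G_L \cong F \star (\star_i\, g_i\langle R_{y_i}\rangle g_i^{-1})$ with $F=1$, the resulting free involutive generators of $G_L$ are \emph{conjugates} $g_i R_{y_i} g_i^{-1}$; you have not argued that these coincide with (or even biject naturally with) the maps $R_y$ for $y$ in your recursively defined set $Y\subseteq L$. The heuristic ``any element of infinite order in $G_L$ would produce two distinct canonical forms'' does not obviously work either: elements of $H$ act trivially on $1$, so an infinite-order element of $H_L$ need not manufacture any contradiction with Theorem~\ref{123}. Second, and more seriously, knowing that $G_L$ is a free $2$-group on some set $Z$ does \emph{not} by itself identify the Baer triple $(G_L,H_L,L')$ with the specific triple built in Theorem~\ref{mt} from $B(Z)$. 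In that theorem the twisted subgroup is $B'=\{I_d\}\cup\{R_y: y\in R(X)\}^G$, where $R(X)$ is constructed combinatorially from $X$; you would need to show that $L'=G_L\cap B'$ equals $\{I_d\}\cup\{R_z:z\in R(Z)\}^{G_L}$ under the identification, and nothing in Sections~3--5 gives this for an arbitrary subloop. In short, the proposal outlines a plausible line of attack but leaves the two decisive steps---matching the Kurosh generators to $Y$, and matching the induced twisted subgroup to the canonical one---entirely open, which is consistent with the conjectural status of the statement.
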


Let $Y = \{y_1, y_2 , ..., y_n \}$ be a free set of generators of $B(X)$. For $i\in \{1,2,...,n\}$ and $v \in \left\langle Y \setminus\{y_i\}\right\rangle$, define $e_{(i,v)},f_{(i,v)}: B(X) \to B(X)$ by 

\begin{center}
$e_{(i,v)}(y_i) = y_i  v$, $f_{(i,v)}(y_i) = v  y_i$ and $e_{(i,v)}(y_j) = f_{(i,v)}(y_j) = y_j$,
\end{center}

for every $j\in \{1,2,...,n\}\setminus\{i\}$. The mappings $e_{(i,v)}$ and $f_{(i,v)}$ are automorphisms of $B(X)$ and they are called \emph{elementary automorphisms} of $B(X)$. An automorphism of $B(X)$ is called \emph{tame} if it belongs to the group generated by all elementary automorphisms of $B(X)$. A question concerning free objects in varieties of loops is whether all of their automorphisms are tame. For free Steiner loops the answer to this question is positive \cite[Theorem~7]{GRRS15}.

\begin{conj}
\label{conj2}
Every automorphism of a free Bol loop of exponent two is $tame$.
\end{conj}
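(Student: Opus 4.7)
The plan is to adapt the classical Nielsen reduction argument to the Bol-loop setting. Given an automorphism $\phi \in \mathrm{Aut}(B(X))$ with $X = \{x_1, \dots, x_n\}$, set $y_i = \phi(x_i)$; by Theorem \ref{mt}, $\{y_1, \dots, y_n\}$ is another free set of generators of $B(X)$. Define a complexity $c(\phi) = \sum_i |y_i|$, where $|y_i|$ is the length of the canonical word in $W(X)$ assigned to $y_i$ by Theorem \ref{123}. Right composition of $\phi$ with an elementary automorphism $e_{(i,v)}$ or $f_{(i,v)}$ leaves $y_j$ unchanged for $j \neq i$ and replaces $y_i$ by $y_i \circ \phi(v)$ or $\phi(v) \circ y_i$, where $\phi(v) \in \langle y_j : j \neq i\rangle$. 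The goal is to strictly decrease $c(\phi)$ by such operations until all $y_i$ lie in $X$, and then to show that the residual permutation of $X$ is itself tame.

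The central technical step is a Nielsen-type minimality lemma: if $c(\phi)$ cannot be strictly decreased by any right composition with a single elementary automorphism, then each $y_i$ is a letter of $X$. To prove this I would expand $y_i$ using Theorem \ref{123} as $\pi(a_1 a_2 \dots a_k a_{k-1} \dots a_1)$ with $a_j \in R(X)$, express each $a_j$ as a Bol word in the $y_\ell$'s, and invoke the case analysis of Corollary \ref{00} together with Proposition \ref{lema6} to identify an outermost factor of $y_i$ (belonging to some $\langle y_\ell : \ell \neq i\rangle$) whose removal strictly reduces $|y_i|$. Once all $y_i \in X$, it remains to show that every permutation of $X$ extends to a tame automorphism; the straightforward free-group recipe $e_{(1,x_2)}\,f_{(2, x_1 \circ x_2)}\,e_{(1, x_1 \circ x_2)}$ for the transposition $(x_1, x_2)$ fails here because $B(X)$ is non-commutative and $(x_1 \circ x_2) \circ x_1 \neq x_2$ in general, so a longer word in the elementary automorphisms, crafted using the identities $(x \circ y) \circ y = x$ and the right Bol identity~\eqref{bol}, must be produced by direct computation.

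The principal obstacle is the Nielsen-type minimality lemma itself. Canonical words in $B(X)$ are considerably more intricate than reduced words in a free group --- witness the long case analysis in Proposition \ref{lema6} and Lemma \ref{lema7a} --- and it is not obvious a priori that the outermost factor of $y_i$ can always be cancelled by a move on a sub-alphabet of the current generating set without inadvertently lengthening some $y_j$. A possible alternative route is to lift $\phi$ to an $H$-equivariant automorphism of the free $2$-group $G = \prod_{y \in R(X)}\star\langle R_y \mid R_y^2 = I_d\rangle$ used in the proof of Theorem \ref{mt} and invoke Nielsen--Whitehead-type results for free products of cyclic groups, but descending such an automorphism back to a tame automorphism of $B(X)$ requires structural information about $H$ that goes well beyond the present paper.
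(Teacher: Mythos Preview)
The statement you are attempting to prove is listed in the paper as Conjecture~\ref{conj2} in the section ``Open problems''; the paper offers no proof whatsoever, only the remark that the analogous question for free Steiner loops has a positive answer. There is therefore no ``paper's own proof'' against which to compare your attempt.

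As for the proposal itself, it is a research outline rather than a proof, and you are candid about this. The Nielsen-type minimality lemma that you identify as the crux is not established: you describe what you \emph{would} do (expand $y_i$ via Theorem~\ref{123}, invoke Corollary~\ref{00} and Proposition~\ref{lema6}), but you do not carry out the argument, and you yourself flag the difficulty that the outermost factor of $y_i$ need not lie in $\langle y_\ell : \ell \neq i\rangle$ in any obvious way, nor is it clear that removing it cannot lengthen another $y_j$. The second step --- realising a transposition of $X$ as a product of elementary automorphisms --- is also left as ``must be produced by direct computation'' without the computation being done; the exponent-two and non-associative setting makes even this step nontrivial. Finally, the alternative route through an $H$-equivariant lift to $G$ is explicitly acknowledged to require information beyond the paper. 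In short, every load-bearing step remains open, which is consistent with the paper's assessment that the statement is a conjecture.
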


\vspace{1cm}

(A. Grishkov) Instituto de Matem\'atica e Estat\'istica, Universidade de S\~ao Paulo, Rua do mat\~ao 1010, S\~ao Paulo - SP, 05508-090, Brazil
and Omsk State a.m. F.M.Dostoevsky University, Russia.

\textit{E-mail adress}: grishkov@ime.usp.br

(M. Rasskazova) Omsk State Technic University, Omsk, Russia

\textit{E-mail adress}: marinarasskazova@yandex.ru

(G. Souza dos Anjos) Instituto de Matem\'atica e Estat\'istica, Universidade de S\~ao Paulo, Rua do mat\~ao 1010, S\~ao Paulo - SP, 05508-090, Brazil

\textit{E-mail adress}: giliard.anjos@unesp.br

\end{document}